\documentclass[10pt,twoside]{article}

\usepackage[english]{babel}
\usepackage[utf8]{inputenc}
\usepackage{amsmath}
\usepackage{amsfonts}
\usepackage{mathtools}
\usepackage{amssymb}
\usepackage{amsthm}
\usepackage{hyperref}
\usepackage{graphicx}
\usepackage{epstopdf}
\usepackage{todonotes}

\newtheorem{Theorem}{Theorem}[section]
\newtheorem{Lemma}[Theorem]{Lemma}
\newtheorem{Corollary}[Theorem]{Corollary}
\newtheorem{Proposition}[Theorem]{Proposition}

\theoremstyle{definition}
\newtheorem{Definition}[Theorem]{Definition}
\newtheorem{Remark}[Theorem]{Remark}
\newtheorem{Example}[Theorem]{Example}

\begin{document}


\pagestyle{myheadings}


\title{On the convergence of generalized kernel-based interpolation by greedy data selection algorithms}

\date{\today}

\author{\large
Kristof Albrecht\footnote{Universit\"at Hamburg, Dept of Mathematics, {\tt kristof.albrecht@studium.uni-hamburg.de}} \qquad
Armin Iske\footnote{Universit\"at Hamburg, Department of Mathematics, {\tt armin.iske@uni-hamburg.de}}
}

\markboth{\footnotesize \rm \hfill K.~ALBRECHT AND A.~ISKE \hfill}
{\footnotesize \rm \hfill CONVERGENCE OF GENERALIZED KERNEL-BASED INTERPOLATION \hfill}

\maketitle
\thispagestyle{plain}


\begin{abstract}
We analyze the convergence of generalized kernel-based interpolation me\-thods.
This is done under minimalistic assumptions on both the kernel and the target function.
On these grounds, we further prove convergence of popular greedy data selection algorithms for totally bounded sets of sampling functionals.
Supporting numerical results concerning computerized tomography are provided for illustration.
\end{abstract}

\section{Introduction}
Kernel functions provide powerful numerical methods for multivariate scattered data approximation~\cite{Wendland2005}.
Positive definite or conditionally positive definite radial kernels --- previously also referred to as {\em radial basis functions} (RBF)~\cite{Buhmann2003} --- 
were primarily used to interpolate multivariate scalar-valued functions from Lagrange data. In that particu\-lar case, finitely many scalar 
samples are taken from a (multivariate) target function by the application of Dirac point evaluation functionals on shifted scattered points.
A more comprehensive discussion on multivariate Lagrange interpolation by (conditionally) positive definite functions can be found 
in~\cite{Buhmann2003,Wendland2005}.

Possible extensions of kernel-based {\em Lagrange} interpolation 
are working with more general evaluation functionals, rather than just Dirac point evaluations.
One classical example are evaluations of (directional) derivatives, whose samples are called {\em Hermite data}.
Other extensions include {\em Radon data} (i.e., from line integrals, cf.~\cite{DeMarchi2018}) or 
{\em cell averages} (e.g., in finite volume methods, cf.~\cite{Aboiyar2010}).

As shown in~\cite{Iske1995Hermite,Wu1992Hermite}, 
the concept of kernel-based interpolation from samples taken by (linear) functionals 
can be unified to obtain the more general Hermite-Birkhoff interpolation scheme,
which is throughout this paper called {\em generalized kernel-based interpolation}, cf.~\cite[Chapter~16]{Wendland2005}.

As regards the theory of kernel-based interpolation, we remark that positive defi\-nite kernels generate a
{\em reproducing kernel Hilbert space} (RKHS), i.e., the kernel's {\em native space} (cf.~\cite[Chapter~10]{Wendland2005}).
For the error analysis of kernel-based interpolation, the pointwise error functional (lying in the dual space of the RKHS) 
plays a central role. In fact, available results on the convergence and approximation orders of kernel-based interpolation
involve the norm of the pointwise error functional, called {\em power function} (cf.~\cite[Chapter~10]{Wendland2005}).

One important ingredient for the development of stable kernel-based interpolation algorithms are {\em Newton bases} (cf.~\cite{Mueller2009,Pazouki2011}).
Moreover, the implementation of {\em greedy algorithms} (cf.~\cite{Wenzel2023standard}) for the selection of suitable sample points
plays an important role for the performance of kernel-based interpolation schemes, especially for their stability and efficiency. 
We remark that the above mentioned ingredients (i.e., power function,  Newton bases, greedy algorithms) 
can be transferred to generalized kernel-based interpolation in the kernel's RKHS.

To make only one example, greedy algorithms were recently used in~\cite{Schaback_greedy2019,wenzel2022adaptive} to solve elliptic PDEs by meshfree kernel methods.
In the particular problem of~\cite{wenzel2022adaptive}, the utilized kernel is required to be sufficiently smooth, which then led to convergence rates
in the kernel's native Sobolev space. 

It is important to note that there are relevant applications where it is not desirable at all to work with {\em smooth} kernels.
One relevant example are WENO schemes for the numerical solution of hyperbolic conservation laws~\cite{Iske1996},
another one is concerning applications of computerized tomography~\cite{DeMarchi2018}, where less restrictive assumptions
on the regularity of the target are essentially required.

This gives rise to allow for {\em rougher} (target) functions.
To this end, Narcowich, Ward \& Wendland~\cite{Narcowich2006} developed error estimates for target functions outside the kernel's native space.
To be more precise, Sobolev-type error estimates and convergence rates were proven in~\cite{Narcowich2006} for Lagrange interpolation on compact domains,
where the kernel is required to be translation invariant and, moreover, the kernel needs to have a Fourier transform with algebraic decay around infinity.

In this paper, we prove convergence for generalized kernel-based interpolation schemes under very {\em mild} restrictions on both the kernel and the target function.
In particular, we do not require the kernel to be translation invariant. Moreover, we do not assume Sobolev regularity for the kernel or for the target function.

The outline of this paper is as follows. 
In Section~\ref{sec:native}, we introduce key ingredients of generalized kernel-based interpolation,
including the native space and the power function.
In Section~\ref{sec:convergence}, we first prove a convergence criterion (Theorem~\ref{power_convergence}), 
involving the asymptotic behaviour of the power function, on which we then prove convergence of generalized kernel-based interpolation (Theorem~\ref{filldistance_convergence})
under rather mild assumptions.
In Section 4, we use Theorem \ref{filldistance_convergence} to prove convergence for two popular classes of greedy algorithms:
the $\beta$-greedy algorithm (Theorem~\ref{beta_greedy}) and the geometric greedy algorithm of~\cite{DeMarchi2005} (Theorem~\ref{hgreedy_convergence}). 
Supporting numerical results on kernel-based interpolation from scattered Radon data are presented in Section~\ref{sec:numerics}. 

\section{Generalized kernel-based interpolation}
\label{sec:native}
We consider a positive definite function $K: \mathbb{R}^d \times \mathbb{R}^d \longrightarrow \mathbb{R}$, i.e., 
$K$ is symmetric and for any finite set $X = \lbrace x_1, \ldots , x_n \rbrace \subset \mathbb{R}^d$ of pairwise distinct points, 
the matrix
\begin{align*}
  A_{K,X} := \left( K(x_i,x_j) \right)_{1 \leq i,j \leq n} \in \mathbb{R}^{n \times n}
\end{align*}
is symmetric positive definite (see~\cite{Iske2018Approx,Wendland2005}). 
Recall that that any positive definite function $K$ generates a reproducing kernel Hilbert space (RKHS) $\mathcal{H}_K$, so that the reproduction
\begin{equation}
\label{eq:repro}
      f(x) = \langle f, K(\cdot, x) \rangle_{\mathcal{H}_K} \qquad \text{ for all } f \in \mathcal{H}_K \text{ and all } x \in \mathbb{R}^d
\end{equation} 
holds.
Therefore, $K$ is referred to as a {\em kernel function}. Moreover, $\mathcal{H}_K$ is called the kernel's {\em native space}.
An essential part of generalized interpolation in $\mathcal{H}_K$ is that the reproduction in~(\ref{eq:repro}) 
can be transferred to the dual space $\mathcal{H}_K^*$ of $\mathcal{H}_K$.

\begin{Proposition}[{\cite[Theorem 16.7]{Wendland2005}}]
\label{prop:repro}
  Let $\lambda \in \mathcal{H}_K^*$. For the function
  \begin{align*}
    \lambda^y K(\cdot,y) : \mathbb{R}^d \longrightarrow \mathbb{R}, 
    \qquad x \longmapsto \lambda^y K(x,y),
  \end{align*}
  we have $\lambda^y K(\cdot,y) \in \mathcal{H}_K$. Moreover, the generalized reproduction holds, i.e.,
  \begin{equation}
  \label{eq:general:repro}
    \lambda(f) = \langle f, \lambda^y K(\cdot,y) \rangle_{\mathcal{H}_K} \qquad \text{ for all } f \in \mathcal{H}_K.
  \end{equation}
\end{Proposition}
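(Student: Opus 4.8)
The plan is to establish this result, which is essentially Theorem 16.7 from Wendland's book, by a careful application of the Riesz representation theorem together with the reproducing property~\eqref{eq:repro}.

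The key object to understand is the map $\lambda^y K(\cdot, y)$. First I would clarify what $\lambda^y K(x, y)$ means: for each fixed $x \in \mathbb{R}^d$, the function $y \mapsto K(x, y)$ lies in $\mathcal{H}_K$ by the reproducing property, so we may apply the functional $\lambda \in \mathcal{H}_K^*$ to it, and the resulting scalar is denoted $\lambda^y K(x, y)$. This defines the function $g(x) := \lambda^y K(x, y)$ pointwise on $\mathbb{R}^d$.

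Next, to show $g \in \mathcal{H}_K$, I would invoke the Riesz representation theorem. Since $\lambda \in \mathcal{H}_K^*$ is a bounded linear functional on the Hilbert space $\mathcal{H}_K$, there exists a unique representer $v_\lambda \in \mathcal{H}_K$ such that $\lambda(f) = \langle f, v_\lambda \rangle_{\mathcal{H}_K}$ for all $f \in \mathcal{H}_K$. The claim is then that $g = v_\lambda$. To verify this, I would evaluate $v_\lambda$ at an arbitrary point $x$ using the reproduction~\eqref{eq:repro}:
\begin{equation*}
  v_\lambda(x) = \langle v_\lambda, K(\cdot, x) \rangle_{\mathcal{H}_K} = \langle K(\cdot, x), v_\lambda \rangle_{\mathcal{H}_K} = \lambda\bigl( K(\cdot, x) \bigr) = \lambda^y K(x, y) = g(x),
\end{equation*}
where symmetry of the inner product and of $K$ are used, and the application of $\lambda$ is to the function $y \mapsto K(x, y)$. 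This shows $g = v_\lambda \in \mathcal{H}_K$, establishing the first assertion.

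Finally, the generalized reproduction~\eqref{eq:general:repro} follows immediately: for any $f \in \mathcal{H}_K$, by the defining property of the Riesz representer we have $\lambda(f) = \langle f, v_\lambda \rangle_{\mathcal{H}_K} = \langle f, \lambda^y K(\cdot, y) \rangle_{\mathcal{H}_K}$, which is exactly~\eqref{eq:general:repro}. I expect the main conceptual obstacle to be the careful bookkeeping of the variable $y$ versus the fixed evaluation point $x$, and in particular justifying that applying $\lambda$ in the $y$-variable to $K(x, \cdot)$ is legitimate and agrees with the Riesz representer evaluated at $x$; the rest is a direct chain of identities. One should also note that this argument is purely Hilbert-space-theoretic and does not require any further regularity assumptions on $K$ beyond positive definiteness, which fits the paper's stated goal of working under minimalistic hypotheses.
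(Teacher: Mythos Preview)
Your argument is correct and is exactly the standard Riesz-representation proof of this result. Note, however, that the paper does not supply its own proof of this proposition at all: it is simply quoted from \cite[Theorem~16.7]{Wendland2005}, so there is nothing to compare against beyond observing that your write-up matches the textbook argument.
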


Note that Proposition~\ref{prop:repro} allows us to compute the Riesz representers for functionals from the dual space $\mathcal{H}_K^*$ explicitely,
which gives an advantage over the general Hilbert space setting. 

Now suppose that 
$\Lambda = \lbrace \lambda_1 , ... , \lambda_n \rbrace \subset \mathcal{H}_K^*$ is a finite set of pairwise distinct linear functionals. 
Given a function $f \in \mathcal{H}_K$ and scalar samples $\lambda_1(f), \ldots , \lambda_n(f)$ of $f$ on $\Lambda$, 
the generalized interpolation problem requires finding a function $s \equiv s_{f,\Lambda}$ from the finite dimensional linear subspace
\begin{align*}
  {\mathcal S}_{K,\Lambda} := \text{span} \; \lbrace \lambda_1^y K(\cdot,y), ... , \lambda_n^y K(\cdot,y) \rbrace \subset \mathcal{H}_K,
\end{align*}
such that the interpolation conditions
\begin{equation}
\label{eq:int:conds}
  \lambda_i(f) = \lambda_i(s) \qquad \text{ for all } i=1, \ldots , n
\end{equation}
are satisfied. 
If the linear functionals in $\Lambda$ are linearly independent, 
then there is a unique solution $s$ to the interpolation problem~(\ref{eq:int:conds}), cf.~\cite[Theorem 16.1]{Wendland2005}).
Thereby, we can define an interpolation operator
\begin{align*}
  I_{K,\Lambda}: \mathcal{H}_K \longrightarrow {\mathcal S}_{K,\Lambda}, \qquad f \longmapsto s_{f,\Lambda}.
\end{align*}
Due to the generalized reproduction in~(\ref{eq:general:repro}), 
the interpolant $s \equiv s_{f,\Lambda}$ coincides with the orthogonal projection 
of $f$ onto ${\mathcal S}_{K,\Lambda}$.

In our convergence analysis for generalized kernel-based interpolation, 
we consider, for a fixed subset of functionals $\Gamma \subset \mathcal{H}_K^*$, 
nested sequences $\left( \Lambda_n \right)_{n \in \mathbb{N}}$ 
of finite subsets $\Lambda_n \subset \Gamma$.
Without loss of generality, we assume throughout this paper that the functionals in $\Lambda_n$ are linearly independent, for every $n \in {\mathbb N}$. 
Otherwise, we can remove redundant basis elements from $\Lambda_n$.

Our general aim is to develop {\em mild} conditions on sequences 
$\left( \Lambda_n \right)_{n \in \mathbb{N}}$ under which the convergence, with respect to the $K$-norm $\|\cdot\|_K := \langle \cdot, \cdot \rangle^{1/2}_{\mathcal{H}_K}$,
\begin{align} \label{convergence_property} 
  \Vert f - I_{K,\Lambda_n} (f) \Vert_K \xrightarrow{n \to \infty} 0
\end{align}
holds for all target functions $f \in \mathcal{H}_{K,\Gamma}$ from the closed linear subspace
\begin{align*}
  \mathcal{H}_{K,\Gamma} = \overline{\text{span} \; \lbrace \lambda^y K(\cdot,y) \mid \lambda \in \Gamma \rbrace} \subset \mathcal{H}_K.
\end{align*}
We recall that the sequence $(I_{K,\Lambda_n} (f))_{n \in \mathbb{N}}$ of interpolants convergences in $\mathcal{H}_{K,\Gamma}$.
For further details on this, we refer to the second statement in Lemma~\ref{weakconv_sufficient}.

In the theory of standard kernel-based Lagrange interpolation, the pointwise error functional 
is an element in the dual $\mathcal{H}_K^*$ of the native space $\mathcal{H}_K$, whose 
$K$-norm $\|\cdot\|_K$, called {\em power function}, leads to an upper bound for pointwise error estimates.

It is rather straightforward to extend the power function of kernel-based Lagrange interpolation to the more general setting of this section:
For a given set of functionals $\Lambda \subset \Gamma \subset \mathcal{H}_K^*$, the {\em generalized power function} is defined as
\begin{align*}
  P_{\Lambda} (\lambda) := \Vert \lambda^y K(\cdot, y) - I_{K,\Lambda} (\lambda^y K(\cdot, y)) \Vert_{K}
  \qquad \text{ for } \lambda \in \Gamma.
\end{align*}
Note that the power function $P_{\Lambda}$ measures the $\|\cdot\|_{K}$-norm distance in ${\mathcal H}_K$
between the Riesz representer $\lambda^y K(\cdot,y)$ and the linear space ${\mathcal S}_{K,\Lambda}$
by the orthogonal projection property of the interpolation operator $I_{K,\Lambda}$.
As a direct consequence of the representation theorem, Proposition~\ref{prop:repro}, we have for $\lambda \in \Gamma$ the equivalence
\begin{align*}
  P_{\Lambda} (\lambda) = 0 \quad \Longleftrightarrow \quad \lambda^y K(\cdot, y) \in {\mathcal S}_{K,\Lambda} \quad \Longleftrightarrow \quad \lambda \in \text{span} \; (\Lambda).
\end{align*}
Hence, the power function can be interpreted as a measurement of the linear dependence between functionals in $\Gamma$. 
It can also be shown that the power function $P_\Lambda$ yields the upper bound
\begin{align*}
  \vert \lambda (f) - \lambda(I_{K,\Lambda}(f)) \vert \leq P_{\Lambda} (\lambda) \cdot \Vert f \Vert_{K} 
  \qquad \text{ for } f \in \mathcal{H}_K
\end{align*}
for the {\em pointwise} interpolation error. 
Another error indicator is the \textit{fill distance},
which is standard for error estimates of kernel-based Lagrange interpolation.
To comply with the more general setting of this section, we extend the notion of fill distances.

\begin{Definition}
\label{defi:fill_distance}
Let $(Z,d_Z)$ be a metric space and $A \subset Z$. 
We define the {\tt generalized fill distance} $h_{A,Z}$ of $A$ in $Z$ 
as the supremum of all distances to the subset $A$, i.e.,
\begin{align*}
      h_{A,Z} = \sup_{z \in Z} \, \inf_{a \in A} d_Z (z,a).
\end{align*}
\end{Definition}

In the setting of this section, the {\em generalized fill distance} is given as
\begin{equation}
\label{eq:generalized_fill_distance}
  h_{\Lambda, \Gamma} = \underset{\gamma \in \Gamma}{\sup} \; \underset{i=1,\ldots,n}{\min} \; \Vert \gamma - \lambda_i \Vert_{K},
\end{equation}
according to Definition~\ref{defi:fill_distance} (with $(Z,d_Z) = (\Gamma, \|\cdot\|_{K}$ and $A = \Lambda$).
In order to give a geometric description for the fill distance, note that $h_{\Lambda, \Gamma}$ 
in \eqref{eq:generalized_fill_distance}
is the radius of the largest open ball in the normed linear space $(\mathcal{H}_K^*,\|\cdot\|_K)$
with midpoint in $\Gamma$ that does not contain any point from $\Lambda$. 
To further explain on this, let us finally make a few relevant examples for scattered data interpolation from different data types.

\begin{Example} (Lagrange data)
For fixed domain $\Omega \subset {\mathbb R}^d$, let $\Gamma \equiv \Gamma(\Omega) = \{ \delta_x \, {\bf : } \, x \in \Omega \} \subset \mathcal{H}^*_K$
denote the set of all Dirac point evaluation functionals with support in $\Omega$. Moreover, for a fixed finite set $X = \{ x_1, \ldots, x_n\} \subset \Omega$ of 
pairwise distinct points in $\Omega$, let $\Lambda \equiv \Lambda(X) = \{ \delta_x \, {\bf : } \, x \in X \} \subset \Gamma$ be the set of point evaluations at the points in $X$.
In this special case, the fill distance in \eqref{eq:generalized_fill_distance} is
$$
    h_{\Lambda, \Gamma} = \underset{y \in \Omega}{\sup} \; \underset{i=1,...,n}{\min} \; \Vert \delta_y - \delta_{x_i} \Vert_{K}.
$$
Later, in Section~\ref{sub:parameterization}, we explain how to use parameterizations of functionals in order to simplify computations for the fill distance.
If we work with the standard parameterization $\varrho: \mathbb{R}^d \longrightarrow \mathcal{H}_K^*$ for point evaluations, where $x \longmapsto \delta_x$, 
cf.~\eqref{eq:parameterization}, then the fill distance of Lagrange interpolation boils down to the commonly used
$$
    h_{X,\Omega} = \underset{y \in \Omega}{\sup} \; \underset{i=1,...,n}{\min} \; \Vert y - x_i \Vert_2.
$$
\end{Example}

\begin{Example} (Radon data)
For radius $r \in \mathbb{R}$ and angle $\theta \in \left[ 0, \pi \right)$, let
\begin{equation}
\label{radon_functional}
    {\mathcal R}_{r,\theta} (f) := \int\limits_{{\mathbb R}} f( r \cdot \cos(\theta) - s \cdot \sin(\theta) , r \cdot \sin(\theta) + s \cdot \cos(\theta) ) \; ds
\end{equation}
denote the {\em Radon transform} of an integrable function $f$ at $(r,\theta) \in {\mathbb R} \times [0,\pi)$. 
We remark that \eqref{radon_functional} gives the integral of $f$ over the straight line 
$$
    \ell \equiv \ell(r,\theta) = \{ (r \cdot \cos(\theta) - s \cdot \sin(\theta) , r \cdot \sin(\theta) + s \cdot \cos(\theta)) \, {\bf : } \, s \in {\mathbb R} \} \subset {\mathbb R}^2.
$$
Later in Section~\ref{sec:numerics}, we discuss kernel-based reconstruction from Radon data, 
where $\Gamma = \{ {\mathcal R}_{r,\theta} \, {\bf : } \, (r,\theta) \in {\mathbb R} \times [0,\pi) \}$ is the set of all line integral operators of the form~\eqref{radon_functional}.
Moreover, $\Lambda = \{\lambda_i \, {\bf : } \, i=1,\ldots,n \} \subset \Gamma$ contains the $n$ line integral operators $\lambda_i \equiv {\mathcal R}_{r_i,\theta_i}$,
$$
    f \longmapsto {\mathcal R}_{r_i,\theta_i}(f)
    \qquad \mbox{ for } \ell_i \equiv \ell(r_i,\theta_i).
$$ 
Further details on kernel-based interpolation from scattered Radon data are explained in Section~5.
For a comprehensive account to Radon transforms and their applications to computerized tomography and nondestructive evaluation, we refer to~\cite{Natterer2001}.
\end{Example}

\begin{Example} (Cell averages)
In the numerical solution of hyperbolic conservation laws, finite volume schemes rely on a partitioning 
of a fixed computational domain $\Omega \subset {\mathbb R}^d$ into finitely many control volumes $V \subset \Omega$.
In this particular application, the unknown solution $u \equiv u(t,\cdot)$ of the hyperbolic PDE is, at time $t$, computed from 
{\em cell average values} over the control volumes,
\begin{equation}
\label{fvm_functional}
    {\mathcal A}_V (u (t, \cdot)) = \frac{1}{{\rm vol}_d (V)} \int_{V} u(t,x) \, dx.
\end{equation}
In this particular case, the set $\Gamma = \{ {\mathcal A}_V \, {\bf : } \, V \subset \Omega \}$ 
contains all integral operators of the form \eqref{fvm_functional} for control volumes $V$ in $\Omega$.
Moreover, for a stencil of control volumes ${\mathcal V} = \{V_1,\ldots,V_n\}$ (i.e., for finitely many control volumes from the partitioning of $\Omega$),
the set of interpolation functionals is $\Lambda = \{ {\mathcal A}_{V_i} \, {\bf : } \, i=1,\ldots,n \}$.
More details on kernel-based finite volume methods are explained in~\cite{Aboiyar2010}.
A more general account to finite volume methods for hyperbolic problems and their applications can be found in~\cite{LeVeque2002}.
\end{Example}

\begin{Remark}
 So far, it may not be clear how we can compute the power function and the interpolants in numerical tests. 
 As was already pointed out in \cite{wenzel2022adaptive}, the Newton basis from~\cite{Mueller2009} and 
 its respective update formula from \cite{Pazouki2011} can be adapted to the case of generalized interpolation. 
 Hence, (update) formulas for the power function and the interpolants are also available. 
\end{Remark}

\section{Convergence under mild conditions} 
\label{sec:convergence}
If we take a closer look at the desired convergence (\ref{convergence_property}), 
we see that this already includes the condition
\begin{align*}
    P_{\Lambda_n} (\lambda) = \Vert \lambda^y K(\cdot, y) - I_{K,\Lambda} (\lambda^y K(\cdot, y)) \Vert_{K} \xrightarrow{n \to \infty} 0 \qquad \text{for all } \lambda \in \Gamma,
\end{align*}
by definition of the subspace $\mathcal{H}_{K,\Gamma} \subset \mathcal{H}_{K}$.
Therefore, the pointwise decay of the power functions is a {\em necessary} condition. 
We can show that it is even {\em sufficient} for the normwise convergence of the interpolation method on the whole space.
Our following theorem generalizes the result~\cite[Proposition~2.1]{Karvonen2022} by Karvonen,
who covers the special case of kernel-based Lagrange interpolation.

\begin{Theorem} \label{power_convergence}
    Let $\Gamma \subset \mathcal{H}_K^*$ and $\left( \Lambda_n \right)_{n \in \mathbb{N}}$ be a nested sequence of finite 
    subsets of $\Gamma$. Then, the generalized interpolation method converges in $\mathcal{H}_{K,\Gamma}$, i.e.,
    \begin{align*}
        \Vert f - I_{K,\Lambda_n}(f) \Vert_{K} \xrightarrow{n \to \infty} 0 \qquad \text{for all } f \in \mathcal{H}_{K,\Gamma},
    \end{align*}
    if and only if the power functions converges pointwise to zero on $\Gamma$, i.e.
    \begin{align*}
        P_{\Lambda_n} (\lambda) \xrightarrow{n \to \infty} 0 \qquad \text{for all } \lambda \in \Gamma. 
    \end{align*}
\end{Theorem}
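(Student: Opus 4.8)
The plan is to recast everything in terms of orthogonal projections. As noted right after~(\ref{eq:int:conds}), the interpolant $I_{K,\Lambda_n}(f)$ is the orthogonal projection of $f$ onto the finite-dimensional subspace $S_{K,\Lambda_n}$. Hence for every $g \in \mathcal{H}_K$ the error $\Vert g - I_{K,\Lambda_n}(g) \Vert_{\mathcal{H}_K}$ is exactly the distance $\text{dist}(g, S_{K,\Lambda_n})$, and in particular
\begin{align*}
  P_{\Lambda_n}(\lambda) = \Vert \lambda^y K(\cdot,y) - I_{K,\Lambda_n}(\lambda^y K(\cdot,y)) \Vert_{\mathcal{H}_K} = \text{dist}(\lambda^y K(\cdot,y), S_{K,\Lambda_n}).
\end{align*}
This turns both sides of the asserted equivalence into statements about how well the nested subspaces $S_{K,\Lambda_n}$ approximate the Riesz representers generating $\mathcal{H}_{K,\Gamma}$.

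The necessity direction (``only if'') is then immediate. Fixing $\lambda \in \Gamma$, the representer $\lambda^y K(\cdot,y)$ lies in $\text{span} \, \lbrace \mu^y K(\cdot,y) \mid \mu \in \Gamma \rbrace \subseteq \mathcal{H}_{K,\Gamma}$, so applying the assumed convergence~(\ref{convergence_property}) to the particular target $f = \lambda^y K(\cdot,y)$ gives $P_{\Lambda_n}(\lambda) \to 0$, exactly as already observed before the theorem statement.

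For the sufficiency direction (``if'') I would use a density-plus-triangle-inequality estimate. Fix $f \in \mathcal{H}_{K,\Gamma}$ and $\varepsilon > 0$. By the definition of $\mathcal{H}_{K,\Gamma}$ as the closed span of the representers, there is a finite combination $g = \sum_{k=1}^m c_k \, \mu_k^y K(\cdot,y)$ with $\mu_k \in \Gamma$ and $\Vert f - g \Vert_{\mathcal{H}_K} < \varepsilon/2$. The key estimate is that the projection error of $g$ is controlled termwise by the individual power functions: by linearity of $I_{K,\Lambda_n}$ and the triangle inequality,
\begin{align*}
  \text{dist}(g, S_{K,\Lambda_n}) = \Big\Vert \sum_{k=1}^m c_k \big( \mu_k^y K(\cdot,y) - I_{K,\Lambda_n}(\mu_k^y K(\cdot,y)) \big) \Big\Vert_{\mathcal{H}_K} \leq \sum_{k=1}^m \vert c_k \vert \, P_{\Lambda_n}(\mu_k).
\end{align*}
Since this sum is finite and each $P_{\Lambda_n}(\mu_k) \to 0$ by hypothesis, there is an $N$ with $\sum_{k=1}^m \vert c_k \vert \, P_{\Lambda_n}(\mu_k) < \varepsilon/2$ for all $n \geq N$. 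Combining this with the triangle inequality for the distance to a subspace, $\text{dist}(f, S_{K,\Lambda_n}) \leq \Vert f - g \Vert_{\mathcal{H}_K} + \text{dist}(g, S_{K,\Lambda_n})$, yields $\Vert f - I_{K,\Lambda_n}(f) \Vert_{\mathcal{H}_K} < \varepsilon$ for all $n \geq N$.

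Once the orthogonal-projection interpretation is in place the argument is elementary, and it notably needs neither the nestedness of $(\Lambda_n)$ nor any monotone-convergence theorem for projections, only that a fixed $g$ is a \emph{finite} combination of representers whose power functions each decay. The sole point requiring care --- and the main, if modest, obstacle --- is the order of quantifiers: the coefficients $c_k$ and the finitely many indices $\mu_k$ must be frozen by the choice of $g$ \emph{before} selecting $N$, so that the finite sum $\sum_k \vert c_k \vert \, P_{\Lambda_n}(\mu_k)$ genuinely tends to zero as $n \to \infty$.
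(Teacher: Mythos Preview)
Your proof is correct and follows essentially the same approach as the paper: both establish convergence on finite linear combinations via the termwise bound $\Vert s - I_{K,\Lambda_n}(s)\Vert_{\mathcal{H}_K} \leq \sum_k |c_k|\,P_{\Lambda_n}(\mu_k)$ and then pass to the closure $\mathcal{H}_{K,\Gamma}$ by density. You spell out the density step and the ``only if'' direction more explicitly than the paper (which relegates the latter to the discussion preceding the theorem), and your closing remark that nestedness is not actually used is a valid observation.
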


\begin{proof}
    For functionals $\mu_1,\ldots,\mu_N \in {\mathcal{H}^*_K}$ and coefficients $c_1,\ldots,c_N \in {\mathbb R}$,
    we consider
    \begin{align*}
        s = \sum\limits_{i=1}^{N} c_i \mu_i^y K(\cdot,y) \in \text{span}_{\mathbb{R}} \; \lbrace \lambda^y K(\cdot , y) \mid \lambda \in \Gamma \rbrace =: {\mathcal S}_{K,\Gamma}.  
    \end{align*}
    Since $I_{K,\Lambda_n}$ is a linear operator for each $n \in \mathbb{N}$, we get the convergence
    \begin{align*}
        \Vert s - I_{K,\Lambda_n}(s) \Vert_{K} &= \Big\Vert \sum\limits_{i=1}^N c_i \cdot \left( \mu_i^y K(\cdot,y) - I_{K, \Lambda_n} (\mu_i^y K(\cdot,y)) \right) \Big\Vert_{K} \\
        &\leq \sum\limits_{i=1}^N \vert c_i \vert \cdot P_{\Lambda_n} (\mu_i) \xrightarrow{n \to \infty} 0.
    \end{align*}

For any $f \in \mathcal{H}_{K,\Gamma}$ and $\varepsilon > 0$, 
there exists $s \in {\mathcal S}_{K,\Gamma}$ satisfying $\Vert f - s \Vert_{K} < \varepsilon / 3$,
due the density of ${\mathcal S}_{K,\Gamma}$ in $\mathcal{H}_{K,\Gamma}$. 
By the convergence $I_{K,\Lambda_n}(s) \longrightarrow s$, for $n \to \infty$, as already shown above, 
we get $\Vert s - I_{K,\Lambda_n}(s) \Vert_{K} < \varepsilon / 3$, for large enough $n$, so that
      \begin{align*}
        \Vert f - I_{K,\Lambda_n}(f) \Vert_{K} &\leq \Vert f - s \Vert_{K} + \Vert s - I_{K,\Lambda_n}(s) \Vert_{K} + \Vert I_{K,\Lambda_n}(s) - I_{K,\Lambda_n}(f) \Vert_{K} \\
        &\leq \Vert f - s \Vert_{K} + \Vert s - I_{K,\Lambda_n}(s) \Vert_{K} + \Vert I_{K,\Lambda_n} \Vert_{K} \cdot \Vert f - s \Vert_{K} \\
        &< \varepsilon
      \end{align*}
for large enough $n$, where we can use $\Vert I_{K,\Lambda_n} \Vert_{K} = 1$, since 
$I_{K,\Lambda_n} : {\mathcal H}_{K,\Gamma} \longrightarrow {\mathcal S}_{K,\Gamma}$ is an orthogonal projection operator.
\end{proof}

Similarly, we can derive a convergence condition with respect to the fill distances $h_{\Lambda_n, \Gamma}$. It should be mentioned here that this only represents a sufficient condition, and the decay of the fill distances might not be necessary for the convergence of the interpolation method.

\begin{Theorem} \label{filldistance_convergence}
    Let $\Gamma \subset \mathcal{H}_K^*$ and $\left( \Lambda_n \right)_{n \in \mathbb{N}}$ be a nested sequence of finite 
    subsets of $\Gamma$ satisfying $h_{\Lambda_n, \Gamma} \searrow 0$ for $n \to \infty$. Then we have
    \begin{align*}
      \Vert f - I_{K,\Lambda_n} (f) \Vert_{K} \xrightarrow{n \to \infty} 0 \qquad \text{for all } f \in \mathcal{H}_{K,\Gamma}.
    \end{align*}
  \end{Theorem}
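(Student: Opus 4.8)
The plan is to reduce the claim to the power-function criterion of Theorem~\ref{power_convergence}, which already establishes that normwise convergence on $\mathcal{H}_{K,\Gamma}$ is equivalent to the pointwise decay $P_{\Lambda_n}(\lambda) \to 0$ for every $\lambda \in \Gamma$. It therefore suffices to show that the hypothesis $h_{\Lambda_n,\Gamma} \searrow 0$ forces the power functions to vanish pointwise, and the whole argument then collapses to a single chain of inequalities.

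First I would record the crucial isometry underlying the setup. By the generalized reproduction in Proposition~\ref{prop:repro}, the function $\lambda^y K(\cdot,y)$ is precisely the Riesz representer of the functional $\lambda \in \mathcal{H}_K^*$. Since $\lambda \mapsto \lambda^y K(\cdot,y)$ is linear in $\lambda$ and the Riesz isomorphism is norm-preserving, we obtain
\[
  \|\lambda - \mu\|_{\mathcal{H}_K^*} = \|\lambda^y K(\cdot,y) - \mu^y K(\cdot,y)\|_{\mathcal{H}_K} \qquad \text{for all } \lambda,\mu \in \mathcal{H}_K^*.
\]
This identity is the bridge that converts distances measured in the dual norm (which define the fill distance) into distances in $\mathcal{H}_K$ (which define the power function).

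Next I would exploit the orthogonal-projection interpretation of $I_{K,\Lambda_n}$. Since $P_{\Lambda_n}(\lambda)$ equals the $\mathcal{H}_K$-distance from $\lambda^y K(\cdot,y)$ to the subspace $S_{K,\Lambda_n}$, and each representer $\lambda_i^y K(\cdot,y)$ lies in that subspace, the best-approximation property yields
\[
  P_{\Lambda_n}(\lambda) \leq \min_{\lambda_i \in \Lambda_n} \|\lambda^y K(\cdot,y) - \lambda_i^y K(\cdot,y)\|_{\mathcal{H}_K} = \min_{\lambda_i \in \Lambda_n} \|\lambda - \lambda_i\|_{\mathcal{H}_K^*},
\]
where the equality uses the isometry above. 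Taking the supremum over $\lambda \in \Gamma$ bounds the right-hand side by $h_{\Lambda_n,\Gamma}$, so that $P_{\Lambda_n}(\lambda) \leq h_{\Lambda_n,\Gamma}$ uniformly in $\lambda$. With $h_{\Lambda_n,\Gamma} \to 0$ by assumption, pointwise decay of the power functions follows at once, and Theorem~\ref{power_convergence} finishes the proof.

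I do not expect a genuine obstacle here: the content lies entirely in recognizing that the fill distance is defined through exactly the dual norm that the Riesz isometry converts into the native-space distance controlling the power function. The only point requiring mild care is confirming that $\lambda \mapsto \lambda^y K(\cdot,y)$ is norm-preserving (not merely that it produces a representer), which is immediate from Proposition~\ref{prop:repro} but worth stating explicitly so that the replacement of $\|\cdot\|_{\mathcal{H}_K^*}$ by $\|\cdot\|_{\mathcal{H}_K}$ is fully justified. Note also that the nestedness of $(\Lambda_n)_{n\in\mathbb{N}}$ is not needed for the bound itself; it only reflects the monotonicity encoded in the $\searrow$ notation.
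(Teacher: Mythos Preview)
Your proof is correct and follows the same approach as the paper: bound the power function by the fill distance via the Riesz isometry and the best-approximation property of $I_{K,\Lambda_n}$, then invoke Theorem~\ref{power_convergence}. The paper merely asserts $P_{\Lambda_n}(\lambda) \le h_{\Lambda_n,\Gamma}$ as ``easy to see'' and applies Theorem~\ref{power_convergence}, so your write-up is a more explicit version of the same argument.
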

  
  \begin{proof}
    It is easy to see that the fill distance is a uniform bound for the power function, so that we immediately get
    \begin{align*}
      P_{\Lambda_n} (\lambda) \leq h_{\Lambda_n, \Gamma} \xrightarrow{n \to \infty} 0 \qquad \text{ for all } \lambda \in \Gamma.
    \end{align*}
    Therefore, Theorem \ref{power_convergence} yields the stated convergence.
  \end{proof}

A key feature of native RKHS is that the norm convergence automatically implies pointwise convergence to the target function $f$. If the chosen kernel $K$ is bounded, we even have uniform convergence for the interpolation method.

\begin{Corollary}
  In the setting of Theorem \ref{power_convergence} or Theorem \ref{filldistance_convergence}, we also have uniform convergence
  \begin{align*}
    \Vert f - I_{K,\Lambda_n} (f) \Vert_{\infty} = \underset{x \in \mathbb{R}^d}{\sup} \; \vert f(x) - I_{K,\Lambda_n}(f)(x) \vert \xrightarrow{n \to \infty} 0,
  \end{align*}
  if $K$ is bounded.
\end{Corollary}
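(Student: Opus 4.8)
The plan is to reduce the uniform (i.e. $L^\infty$) error to the native space error that Theorem~\ref{power_convergence} (equivalently Theorem~\ref{filldistance_convergence}) already delivers. The starting observation is that both $f$ and its interpolant $I_{K,\Lambda_n}(f)$ lie in $\mathcal{H}_K$, so the error $e_n := f - I_{K,\Lambda_n}(f)$ is again an element of $\mathcal{H}_K$ and the pointwise reproduction~(\ref{eq:repro}) applies to it. Writing the pointwise error as an inner product, $e_n(x) = \langle e_n, K(\cdot,x)\rangle_{\mathcal{H}_K}$, I would apply the Cauchy--Schwarz inequality to obtain, for every $x \in \mathbb{R}^d$,
\[
  \vert f(x) - I_{K,\Lambda_n}[f](x) \vert \leq \Vert e_n \Vert_{\mathcal{H}_K} \cdot \Vert K(\cdot,x) \Vert_{\mathcal{H}_K}.
\]

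The next step is to make the factor $\Vert K(\cdot,x) \Vert_{\mathcal{H}_K}$ uniform in $x$. Using the reproduction~(\ref{eq:repro}) once more with the choice $f = K(\cdot,x)$ gives $\Vert K(\cdot,x) \Vert_{\mathcal{H}_K}^2 = \langle K(\cdot,x), K(\cdot,x)\rangle_{\mathcal{H}_K} = K(x,x)$, hence $\Vert K(\cdot,x) \Vert_{\mathcal{H}_K} = \sqrt{K(x,x)}$. Under the assumption that $K$ is bounded there is a constant $M > 0$ with $K(x,x) \leq M$ for all $x \in \mathbb{R}^d$; this only needs boundedness on the diagonal, which is implied by boundedness of $K$. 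Taking the supremum over $x$ therefore yields
\[
  \Vert f - I_{K,\Lambda_n}(f) \Vert_{\infty} \leq \sqrt{M} \cdot \Vert f - I_{K,\Lambda_n}(f) \Vert_{\mathcal{H}_K}.
\]

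Since the right-hand side tends to $0$ as $n \to \infty$ by Theorem~\ref{power_convergence} (or Theorem~\ref{filldistance_convergence}), the claimed uniform convergence follows at once. In fact, the bare Cauchy--Schwarz estimate above already shows that native space convergence forces pointwise convergence, which explains the remark preceding the statement; boundedness of $K$ is exactly what upgrades this to a convergence that is uniform over all of $\mathbb{R}^d$.

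I do not expect a genuine obstacle here, as the argument is a direct combination of the reproducing property and Cauchy--Schwarz. The only point requiring a little care is the uniform control of the representer norm, i.e. translating ``$K$ is bounded'' into $\sup_x K(x,x) < \infty$. Without this hypothesis one still obtains locally uniform convergence on every set on which $x \mapsto K(x,x)$ stays bounded, which is worth keeping in mind for unbounded kernels.
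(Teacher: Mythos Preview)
Your proof is correct and follows essentially the same route as the paper: write the pointwise error via the reproducing property~(\ref{eq:repro}), apply Cauchy--Schwarz, use $\Vert K(\cdot,x)\Vert_{\mathcal{H}_K}=\sqrt{K(x,x)}$, and invoke boundedness of $K$ to make the bound uniform in $x$ before applying Theorem~\ref{power_convergence} or Theorem~\ref{filldistance_convergence}.
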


\begin{proof}
    This follows from our previous results and the standard estimate
    \begin{align*}
        \vert f(x) - I_{K,\Lambda_n}(f)(x) \vert &= \vert \langle f - I_{K,\Lambda_n} (f) , K(\cdot,x) \rangle_{\mathcal{H}_K} \vert \\
        &\leq \underset{x \in \mathbb{R}^d}{\sup} \; \sqrt{K(x,x)} \cdot \Vert f - I_{K,\Lambda_n} (f) \Vert_{K},
    \end{align*}
    as we have $\Vert K(\cdot,x) \Vert_{K} = \sqrt{K(x,x)}$ for all $x \in \mathbb{R}^d$.
\end{proof}

\subsection{Parameterization}
\label{sub:parameterization}
In many cases, the superset $\Gamma \subset \mathcal{H}_K^*$ is parameterized by a much simpler space. For example, the point evaluation functionals are parameterized by the mapping
\begin{equation}
\label{eq:parameterization}
    \varrho: \mathbb{R}^d \longrightarrow \mathcal{H}_K^*, \qquad x \longmapsto \delta_x.
\end{equation}
One main advantage of parameterizations is that, in general, many computations like evaluations of norms are less costly than in the native dual space. Instead of finding suitable sets of data points in the dual space, we can search for good data points in the parameter space. If the parameterization map $\varrho$ is uniformly continuous, the decay of the fill distances $h_{X_n,\Omega}$ in the parameter space then translates to the decay of the fill distances $h_{\varrho(X_n),\Gamma}$ in the dual space.

\begin{Theorem} \label{parameter_convergence}
    Let $\Gamma \subset \mathcal{H}_K^*$.
     Moreover, let $\left( \Omega, d_\Omega \right)$ be a metric space and $\varrho: \Omega \longrightarrow \Gamma$ be uniformly continuous and bijective. 
     If $\left( X_n \right)_{n \in \mathbb{N}}$ is a nested sequence of finite subsets from $\Omega$ satisfying $h_{X_n,\Omega} = \sup_{y \in \Omega} \min_{x \in X_n} d_\Omega(y,x) \searrow 0$ for $n \to \infty$, then we have the convergence
    \begin{align*}
        \Vert f - I_{K,\varrho(X_n)}(f) \Vert_{K} \xrightarrow{n \to \infty} 0 \qquad \text{for all } f \in \mathcal{H}_{K,\Gamma}.
    \end{align*}
\end{Theorem}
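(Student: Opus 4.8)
The plan is to reduce the statement to Theorem~\ref{filldistance_convergence} by showing that the fill distance decay $h_{X_n,\Omega} \searrow 0$ in the parameter space $\Omega$ transfers, via the uniform continuity of $\varrho$, to a fill distance decay $h_{\varrho(X_n),\Gamma} \searrow 0$ in the dual space $\mathcal{H}_K^*$. Once this transfer is established, the desired convergence is immediate from the already proven result.

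First I would set $\Lambda_n := \varrho(X_n) \subset \Gamma$ and verify the hypotheses of Theorem~\ref{filldistance_convergence}. Since $X_n$ is finite and $\varrho$ is injective, each $\Lambda_n$ is a finite set of pairwise distinct functionals; because $\Gamma$ is linearly independent, so is every subset $\Lambda_n$. Nestedness of $(\Lambda_n)_{n\in\mathbb{N}}$ follows at once from nestedness of $(X_n)_{n\in\mathbb{N}}$ since $\varrho$ is a map, and consequently $h_{\Lambda_n,\Gamma}$ is monotonically non-increasing in $n$, because enlarging $\Lambda_n$ can only shrink the inner minimum in the definition of the fill distance.

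The central step is the $\varepsilon$-$\delta$ translation of fill distances. Fix $\varepsilon > 0$ and choose $\delta > 0$ from the uniform continuity of $\varrho$, so that $d_\Omega(\omega,\omega') < \delta$ implies $\Vert \varrho(\omega) - \varrho(\omega') \Vert_{\mathcal{H}_K^*} < \varepsilon$. Since $h_{X_n,\Omega} \to 0$, there is an $N$ with $h_{X_n,\Omega} < \delta$ for all $n \ge N$. For any $\lambda \in \Gamma$, surjectivity of $\varrho$ gives a unique $\omega \in \Omega$ with $\lambda = \varrho(\omega)$, and for $n \ge N$ the finiteness of $X_n$ yields some $x \in X_n$ attaining the minimum with $d_\Omega(\omega,x) < \delta$. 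Applying uniform continuity and noting $\varrho(x) \in \Lambda_n$ gives
\begin{align*}
  \min_{\mu \in \Lambda_n} \Vert \lambda - \mu \Vert_{\mathcal{H}_K^*} \le \Vert \varrho(\omega) - \varrho(x) \Vert_{\mathcal{H}_K^*} < \varepsilon.
\end{align*}
Taking the supremum over $\lambda \in \Gamma$ shows $h_{\Lambda_n,\Gamma} \le \varepsilon$ for all $n \ge N$, and since $\varepsilon > 0$ was arbitrary we obtain $h_{\Lambda_n,\Gamma} \searrow 0$.

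Finally I would invoke Theorem~\ref{filldistance_convergence} with the sequence $(\Lambda_n)_{n\in\mathbb{N}} = (\varrho(X_n))_{n\in\mathbb{N}}$ to conclude the asserted convergence on $\mathcal{H}_{K,\Gamma}$. I expect the main obstacle to be the fill distance translation itself, in particular the bookkeeping that the nearest parameter $x \in X_n$ to $\omega$ has its image $\varrho(x)$ lying in $\Lambda_n$, which is exactly where surjectivity and the equality $\Lambda_n = \varrho(X_n)$ are used. A minor point to check carefully is that the strict inequality $d_\Omega(\omega,x) < \delta$ is guaranteed by $h_{X_n,\Omega} < \delta$ together with the finiteness of $X_n$, so that the inner minimum is genuinely attained.
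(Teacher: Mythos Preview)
Your proposal is correct and follows essentially the same route as the paper: reduce to Theorem~\ref{filldistance_convergence} by pushing the fill distance decay through the uniformly continuous bijection $\varrho$ via an $\varepsilon$--$\delta$ argument. If anything, you are slightly more careful than the paper in explicitly verifying the linear independence and nestedness of $\Lambda_n = \varrho(X_n)$ before invoking Theorem~\ref{filldistance_convergence}.
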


\begin{proof}
    According to Theorem \ref{filldistance_convergence}, it is sufficient to show $h_{\varrho(X_n),\Gamma} \searrow 0$ for $n \to \infty$. So let $\varepsilon > 0$. Since $\varrho$ is uniformly continuous, there is $\delta > 0$, such that the implication
    \begin{align*}
        d_{\Omega} (x,y) < \delta \quad \Longrightarrow \quad \Vert \varrho(x) - \varrho(y) \Vert_{K} < \varepsilon
    \end{align*}
    holds for all $x,y \in \Omega$. Moreover, we can find $N \in \mathbb{N}$ with $h_{X_N , \Omega} < \delta$. For any $\gamma \in \Gamma$, there is $x_\gamma \in \Omega$ satisfying $\varrho(x_\gamma) = \gamma$. Now choose $x \in X_N$ satisfying
    \begin{align*}
        d_{\Omega} (x_\gamma, x) = \underset{y \in X_N}{\min} \; d_{\Omega} (x_\gamma,y) < \delta.
    \end{align*}
    Then, we get
    \begin{align*}
        \underset{\lambda \in \varrho(X_N)}{\min} \; \Vert \gamma - \lambda \Vert_{K} \leq \Vert \varrho(x_\gamma) - \varrho(x) \Vert_{K} < \varepsilon.
    \end{align*}
    Altogether, this yields
    \begin{align*}
        h_{\varrho(X_n),\Gamma} \leq h_{\varrho(X_N), \Gamma} = \underset{\gamma \in \Gamma}{\sup} \; \underset{\lambda \in \varrho(X_N)}{\min} \; \Vert \gamma - \lambda \Vert_{K} \leq \varepsilon \qquad \text{ for } n \geq N.
    \end{align*}
\end{proof}

\section{Greedy data selection algorithms}
\label{sec:greedy}
The main concept of greedy algorithms is to find the optimal data point in terms of a pre-defined error indicator in each iteration. Given the current data set $\Lambda_n$ in the $n$-th step and an error function $\eta: \Gamma \longrightarrow \mathbb{R}$, the new functional is chosen via the selection rule
\begin{align*}
  \eta (\lambda_{n+1}) = \underset{\lambda \in \Gamma}{\sup} \; \eta(\lambda).
\end{align*}
Here, the selection rule certainly depends on the current data set $\Lambda_n$ (target-indepen\-dent), and it might also depend on the function $f$ that we want to approximate (target-dependent).

\begin{Remark}
  In most cases, the error functional $\eta$ is bounded and depends continuously on the input argument $\lambda$. But this does not guarantee that $\eta$ attains a maximum on $\Gamma$, as $\Gamma$ does not have to be compact in general. This problem can be fixed (theoretically) by introducing a parameter $\alpha \in \left( 0,1 \right)$ and changing the selection rule to a the weaker version (cf.~\cite{DeVore2013})
  \begin{align*}
    \eta (\lambda_{n+1}) \geq \alpha \cdot \underset{\lambda \in \Gamma}{\sup} \; \eta(\lambda).
  \end{align*}
  However, this adjustment does not affect our convergence results, so that we can ignore this consideration in our analysis.
\end{Remark}

Regarding the target-independent algorithms, we are interested in generating sequences of finite subsets satisfying the convergence conditions from Section~\ref{sec:convergence}. 
Recall that the required decay of the fill distances, i.e., $h_{\Lambda_n, \Gamma} \searrow 0$ for $n \to \infty$, gives a geometric condition. 
We remark that this decay condition on $(h_{\Lambda_n, \Gamma})_{n \in \mathbb{N}}$ can only hold, iff $\Gamma$ is \textit{totally bounded}. 
Recall that a set $\Gamma \subset \mathcal{H}_K^*$ is said to be \textit{totally bounded} in the normed linear space $(\mathcal{H}_K^*, \| \cdot \|_{K})$, 
iff for every $\varepsilon>0$ there are finitely many open balls $B(\gamma,\varepsilon) \subset \mathcal{H}_K^*$ with centers $\gamma \in \Gamma$ and radius $\varepsilon$, 
such that $\Gamma$ is covered by the union of these balls.
We will rely on this basic assumption for $\Gamma$ in our subsequent convergence analysis. 
Before we go into further details, we first collect a few useful auxiliary statements.

\begin{Lemma} \label{totalbounded_convergence}
  Let $\left( Z, d_Z \right)$ be totally bounded and $\left( a_i \right)_{i \in \mathbb{N}}$ be a sequence in $Z$. 
  Moreover, let $A_n := \lbrace a_1, ... , a_n \rbrace$, for $n \in \mathbb{N}$.
  Then we have the convergence
  \begin{align*}
    \textnormal{dist} (a_{n+1}, A_n) := \underset{i = 1, ... , n}{\min} \; d_Z(a_{n+1}, a_i) \xrightarrow{n \to \infty} 0.
  \end{align*} 
  In the case, where $Z = \Gamma \subset \mathcal{H}_K^*$ and $A_n = \Lambda_n = \lbrace \lambda_1, ... , \lambda_n \rbrace$,
   we have the convergence
  \begin{align*}
    P_{\Lambda_n} (\lambda_{n+1}) \xrightarrow{n \to \infty} 0.
  \end{align*}
\end{Lemma}

\begin{proof}
  Given $\varepsilon > 0$, we can find $z_1, ... , z_M \in Z$ satisfying
  \begin{align*}
    Z \subset \bigcup\limits_{j=1}^{M} B_{\varepsilon / 2} (z_j).
  \end{align*}
  Now we let
  \begin{align*}
    J_j :=
    \lbrace i \in \mathbb{N} \mid a_i \in B_{\varepsilon / 2} (z_j) \rbrace \qquad \text{for } j = 1 , ... , M
  \end{align*}
  and
  \begin{align*}
    N_j := 
  \left\{
    \begin{array}{cl}
      \min (J_j) & \mbox{ for } J_j \neq \emptyset, \\[1.5ex]
      0 & \mbox{ for } J_j = \emptyset
    \end{array}
\right.    
    \qquad \text{for } j = 1 , ... , M.
  \end{align*}
  Note that for $N := \max \lbrace N_j \mid j = 1, ... , M \rbrace$ and $n > N$, 
  there is one index $1 \leq j_n \leq M$ and one element $a \in A_N $ satisfying
  \begin{align*}
    \{ a_n , a \} \subset 
    B_{\varepsilon / 2} (z_{j_n}),
  \end{align*}
  due to the \textit{pigeonhole principle}. Using the triangle inequality, this implies
  \begin{align*}
    \text{dist}(a_n, A_N) \leq d_Z (a_n, a) \leq d_Z(a_n, z_{j_n}) + d_Z( z_{j_n}, a) < \varepsilon,
  \end{align*}
  whereby
  \begin{align*}
    \text{dist} (a_n, A_{n-1}) \leq \text{dist} (a_n, A_{N}) < \varepsilon.
  \end{align*}
  This completes our proof for the first statement.
  Finally, the second statement follows immediately from $P_{\Lambda_n}(\lambda_{n+1}) \leq \text{dist} (\lambda_{n+1}, \Lambda_n)$, for $n \in \mathbb{N}$.
\end{proof}

\begin{Lemma} \label{weakconv_sufficient}
For $\Gamma \subset \mathcal{H}_K^*$ let $f,g \in \mathcal{H}_{K,\Gamma}$.
Then, we have the equivalence
  \begin{align*}
    f = g \qquad \Longleftrightarrow \qquad \lambda(f) = \lambda(g) \quad \text{for all } \lambda \in \Gamma.
  \end{align*}
  Moreover, if $\left( \Lambda_n \right)_{n \in \mathbb{N}}$ is a nested sequence of finite subsets of $\Gamma$ satisfying
  \begin{align*}
    \lambda(I_{K,\Lambda_n}(f)) \xrightarrow{n \to \infty} \lambda(f) \qquad \mbox{ for all } \lambda \in \Gamma,
  \end{align*}
 for fixed $f \in \mathcal{H}_{K,\Gamma}$, then we have normwise convergence
  \begin{align*}
    \Vert f - I_{K,\Lambda_n}(f) \Vert_{K} \xrightarrow{n \to \infty} 0.
  \end{align*}
\end{Lemma}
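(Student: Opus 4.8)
The plan is to handle the two statements in turn, using the generalized reproduction from Proposition~\ref{prop:repro} as the bridge between functional values and inner products. For the equivalence, the forward implication is immediate. For the converse, I would rewrite the hypothesis $\lambda(f)=\lambda(g)$ via~(\ref{eq:general:repro}) as $\langle f-g,\lambda^y K(\cdot,y)\rangle_{\mathcal{H}_K}=0$ for every $\lambda\in\Gamma$. This says that $f-g$ is orthogonal to each spanning element of $\mathcal{H}_{K,\Gamma}$, hence to the entire closed subspace $\mathcal{H}_{K,\Gamma}$. Since $f,g\in\mathcal{H}_{K,\Gamma}$ forces $f-g\in\mathcal{H}_{K,\Gamma}$, an element orthogonal to its own subspace must vanish, so $f=g$.

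For the normwise convergence, the key structural fact is that $I_{K,\Lambda_n}(f)$ is the orthogonal projection of $f$ onto $S_{K,\Lambda_n}$, and that the nestedness $\Lambda_n\subset\Lambda_{n+1}$ produces an increasing chain $S_{K,\Lambda_n}\subset S_{K,\Lambda_{n+1}}$. First I would show that $(I_{K,\Lambda_n}(f))_{n\in\mathbb{N}}$ is a Cauchy sequence. Writing $P_n$ for the orthogonal projection onto $S_{K,\Lambda_n}$, the nested-projection identity $P_n P_m = P_n$ for $m\geq n$ gives $I_{K,\Lambda_n}(f)=P_n(I_{K,\Lambda_m}(f))$, so Pythagoras yields $\|I_{K,\Lambda_m}(f)\|_{\mathcal{H}_K}^2=\|I_{K,\Lambda_n}(f)\|_{\mathcal{H}_K}^2+\|I_{K,\Lambda_m}(f)-I_{K,\Lambda_n}(f)\|_{\mathcal{H}_K}^2$. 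The sequence $\|I_{K,\Lambda_n}(f)\|_{\mathcal{H}_K}$ is nondecreasing and bounded above by $\|f\|_{\mathcal{H}_K}$, hence convergent, which forces the differences to tend to zero. By completeness of $\mathcal{H}_K$ the interpolants converge to some limit $g\in\mathcal{H}_K$, and since each interpolant lies in the closed subspace $\mathcal{H}_{K,\Gamma}$, so does $g$.

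It remains to identify $g$ with $f$. As each $\lambda\in\Gamma$ is a bounded, hence continuous, functional on $\mathcal{H}_K$, the norm convergence $I_{K,\Lambda_n}(f)\to g$ gives $\lambda(g)=\lim_{n\to\infty}\lambda(I_{K,\Lambda_n}(f))$, which by assumption equals $\lambda(f)$. Applying the equivalence from the first part to the pair $f,g\in\mathcal{H}_{K,\Gamma}$ then yields $g=f$, whence $\|f-I_{K,\Lambda_n}(f)\|_{\mathcal{H}_K}\to 0$. I expect the main obstacle to be precisely the upgrade from the weak, functional-wise hypothesis to strong norm convergence: this is where the orthogonal-projection structure together with the nestedness of $(\Lambda_n)_{n\in\mathbb{N}}$ is indispensable, since the pointwise convergence of $\lambda(I_{K,\Lambda_n}(f))$ alone would not force the interpolants to form a Cauchy sequence.
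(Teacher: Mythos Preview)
Your proposal is correct and follows essentially the same route as the paper's proof: both parts use the generalized reproduction to recast $\lambda(f)=\lambda(g)$ as orthogonality to $\mathcal{H}_{K,\Gamma}$, and both establish norm convergence by exploiting the nested-projection identity to show that $(\Vert I_{K,\Lambda_n}(f)\Vert_{\mathcal{H}_K}^2)_n$ is monotone bounded, deducing that $(I_{K,\Lambda_n}(f))_n$ is Cauchy, and then identifying its limit via the first part. The only cosmetic difference is that you phrase the projection step via $P_nP_m=P_n$ and Pythagoras, whereas the paper writes out the same identity directly.
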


\begin{proof}
  For the first statement, note that
  \begin{align*}
    \lambda(f) = \lambda(g) \quad \text{for all } \lambda \in \Gamma \qquad \Longleftrightarrow \qquad  f - g \perp \text{span}_{\mathbb{R}} \; \lbrace \lambda^y K(\cdot, y) \mid \lambda \in \Gamma \rbrace
  \end{align*}
  holds for $f,g \in \mathcal{H}_{K,\Gamma}$ due to the generalized reproduction property. 
  With the continuity of the inner product, this is equivalent to $f-g \perp \mathcal{H}_{K,\Gamma}$ and $f=g$. 
  
  For the second statement, regard for fixed $f \in \mathcal{H}_{K,\Lambda}$ the sequence
  \begin{align*}
    \left( \Vert I_{K,\Lambda_n} (f) \Vert_K^2 \right)_{n \in \mathbb{N}}.
  \end{align*}
  Due to the properties
  \begin{align*}
    I_{K,\Lambda_n}(I_{K,\Lambda_m}(f)) = I_{K,\Lambda_n}(f) \quad \text{for } m > n \qquad \text{and} \qquad \Vert I_{K,\Lambda_n}(f) \Vert_K \leq \Vert f \Vert_K
  \end{align*}
  for $ n \in \mathbb{N}$, this sequence is bounded and monotonically increasing. 
  Hence, it is convergent, and thereby a Cauchy sequence. From the identity
  \begin{align*}
    \Vert I_{K,\Lambda_m}(f) - I_{K,\Lambda_n}(f) \Vert_K^2 = \Vert I_{K,\Lambda_m}(f) \Vert_K^2 - \Vert I_{K,\Lambda_n}(f) \Vert_K^2,
  \end{align*}
  we can conclude that $\left( I_{K,\Lambda_n}(f) \right)_{n \in \mathbb{N}}$ is a Cauchy sequence, too. Since $\mathcal{H}_{K,\Gamma}$ is complete, there must be a normwise limit $g \in \mathcal{H}_{K,\Gamma}$. In this case, $g$ is also the weak limit of this sequence, so that we get
  \begin{align*}
    \lambda(g) = \lim\limits_{n \to \infty} \lambda (I_{K,\Lambda_n}(f)) = \lambda(f) \qquad \text{for all } \lambda \in \Gamma,
  \end{align*}
  due to our assumptions. Now the assertion follows from the first statement.
\end{proof}

\subsection{$\beta$-greedy algorithms}
The first class of greedy algorithms we analyze is the collection of $\beta$-greedy algorithms, which were introduced in \cite{Wenzel2023standard} and summarized many well-known greedy algorithms. For given $\beta \in \left[ 0, \infty \right)$, the selection rule is defined as
\begin{align*}
  \vert \lambda_{n+1} (f) - \lambda_{n+1} (I_{K,\Lambda_n}(f)) \vert^\beta \cdot P_{\Lambda_{n}} (\lambda_{n+1})^{1 - \beta} \\
  = \underset{\lambda \in \Gamma}{\sup} \; \vert \lambda (f) - \lambda (I_{K,\Lambda_n}(f)) \vert^\beta \cdot P_{\Lambda_{n}} (\lambda)^{1 - \beta}
\end{align*}
The most common choices for $\beta$ are the following: 
\begin{itemize}
  \item For $\beta = 0$, the resulting algorithm is known as the $P$-greedy algorithm (cf.~\cite{DeMarchi2005}) and aims to maximize the power function in each step. This is particularly improving the stability of the reconstruction methods, since the power function value acts as a lower bound for the smallest eigenvalue of the interpolation matrix (cf.~\cite[Theorem 12.1]{Wendland2005}), and is therefore critical for the condition of the problem.
  \item The choice $\beta = 1$ leads to the $f$-greedy algorithm (cf.~\cite{Schaback2000}) that maximizes the pointwise error. Hence, we aim to improve the interpolation error in areas where the current error is rather high.
  \item In order to achieve a good tradeoff between stability and approximation quality, it is reasonable to choose $\beta = 1/2$, which coincides with the psr-greedy method (cf.~\cite{Dutta2021}). 
  \item The approach can also be extended to the limit case $\beta \to \infty$, which corresponds to the $f/P$-greedy algorithm (cf.~\cite{Mueller2009Thesis,Schaback2006}). Here, we minimize the native space distance between the target function and the interpolant in each iteration.
\end{itemize}

In the case of totally bounded supersets, we can prove the convergence of the $\beta$-greedy algorithms for every $\beta \in \left[ 0, \infty \right]$.

\begin{Theorem} \label{beta_greedy}
  Let $\beta \in \left[ 0, \infty \right]$ and $\Gamma \subset \mathcal{H}_K^*$ be totally bounded. If $f \in \mathcal{H}_{K,\Gamma}$ and $\left( \Lambda_n \right)_{n \in \mathbb{N}}$ is chosen via the $\beta$-greedy algorithm, then we have the convergence
  \begin{align*}
    \Vert f - I_{K,\Lambda_n}(f) \Vert_{K} \xrightarrow{n \to \infty} 0.
  \end{align*}
\end{Theorem}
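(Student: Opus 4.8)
The plan is to reduce everything to the second statement of Lemma~\ref{weakconv_sufficient}: it suffices to prove that $\lambda(r_n) \to 0$ for every fixed $\lambda\in\Gamma$, where I abbreviate the residual by $r_n := f - I_{K,\Lambda_n}(f)$. Since the $\beta$-greedy rule controls only one product per step, the whole difficulty lies in turning this per-step, one-point control into control for each fixed $\lambda$.

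First I would record two step-wise facts that hold for \emph{any} nested linearly independent sequence, regardless of how $\lambda_{n+1}$ is chosen. Writing $v_{n+1}$ for the normalized Newton basis element spanning the orthogonal complement of $S_{K,\Lambda_n}$ in $S_{K,\Lambda_{n+1}}$, the orthogonal projection property gives $I_{K,\Lambda_{n+1}}(f) - I_{K,\Lambda_n}(f) = \langle r_n, v_{n+1}\rangle_{\mathcal{H}_K}\, v_{n+1}$, and the generalized reproduction~(\ref{eq:general:repro}), together with $r_n \perp S_{K,\Lambda_n}$, identifies $\langle r_n, v_{n+1}\rangle_{\mathcal{H}_K} = \lambda_{n+1}(r_n)/P_{\Lambda_n}(\lambda_{n+1})$. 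As in the proof of Lemma~\ref{weakconv_sufficient}, the sequence $\Vert I_{K,\Lambda_n}(f)\Vert_{\mathcal{H}_K}^2$ is monotonically increasing and bounded by $\Vert f\Vert_{\mathcal{H}_K}^2$, hence Cauchy, so its increments vanish; this yields the crucial ratio decay
\begin{align*}
  \frac{\vert\lambda_{n+1}(r_n)\vert}{P_{\Lambda_n}(\lambda_{n+1})} = \Vert I_{K,\Lambda_{n+1}}(f) - I_{K,\Lambda_n}(f)\Vert_{\mathcal{H}_K} \xrightarrow{n\to\infty} 0.
\end{align*}
Independently, since $\Gamma$ is totally bounded, the second statement of Lemma~\ref{totalbounded_convergence} supplies $P_{\Lambda_n}(\lambda_{n+1}) \to 0$.

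Next I would feed these two facts into the greedy selection rule. For finite $\beta>0$ I rewrite the selected value as
\begin{align*}
  \vert\lambda_{n+1}(r_n)\vert^\beta P_{\Lambda_n}(\lambda_{n+1})^{1-\beta} = \Big(\tfrac{\vert\lambda_{n+1}(r_n)\vert}{P_{\Lambda_n}(\lambda_{n+1})}\Big)^\beta P_{\Lambda_n}(\lambda_{n+1}),
\end{align*}
which tends to $0$ as a product of a vanishing factor (the ratio, raised to the power $\beta>0$) and a bounded, indeed vanishing, factor. By the selection rule this is the supremum over $\Gamma$, so $\sup_{\lambda\in\Gamma}\vert\lambda(r_n)\vert^\beta P_{\Lambda_n}(\lambda)^{1-\beta} \to 0$; the cases $\beta=0$ and $\beta=\infty$ reduce directly to $P_{\Lambda_n}(\lambda_{n+1}) = \sup_\lambda P_{\Lambda_n}(\lambda)\to 0$ and to $\sup_\lambda \vert\lambda(r_n)\vert/P_{\Lambda_n}(\lambda)\to 0$, respectively.

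Finally I would extract pointwise decay for a fixed $\lambda\in\Gamma$ from this supremum bound, using the pointwise error estimate $\vert\lambda(r_n)\vert \le P_{\Lambda_n}(\lambda)\Vert f\Vert_{\mathcal{H}_K}$ and the uniform bound $P_{\Lambda_n}(\lambda) \le \Vert\lambda\Vert_{\mathcal{H}_K^*} \le P_{\max} := \sup_{\mu\in\Gamma}\Vert\mu\Vert_{\mathcal{H}_K^*} < \infty$, finite since total boundedness implies boundedness. The exponent bookkeeping then splits by the sign of $1-\beta$: for $0<\beta<1$ the estimate rewrites as $P_{\Lambda_n}(\lambda) \ge \vert\lambda(r_n)\vert/\Vert f\Vert_{\mathcal{H}_K}$, giving $\vert\lambda(r_n)\vert \le \Vert f\Vert_{\mathcal{H}_K}^{1-\beta}\,\vert\lambda(r_n)\vert^\beta P_{\Lambda_n}(\lambda)^{1-\beta}$; for $\beta>1$ the bound $P_{\Lambda_n}(\lambda)^{1-\beta} \ge P_{\max}^{1-\beta}$ gives $\vert\lambda(r_n)\vert^\beta \le P_{\max}^{\beta-1}\,\vert\lambda(r_n)\vert^\beta P_{\Lambda_n}(\lambda)^{1-\beta}$; and $\beta\in\{0,1,\infty\}$ are immediate. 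In each case $\vert\lambda(r_n)\vert\to 0$, where the degenerate situation $\lambda\in\text{span}(\Lambda_n)$, in which $\vert\lambda(r_n)\vert=0$, is harmless, and Lemma~\ref{weakconv_sufficient} then delivers the asserted norm convergence. I expect the main obstacle to be the ratio-decay step, since it is the one place where the orthogonal-projection and Newton-basis structure (rather than a lemma quoted verbatim) must be exploited, and it is precisely what converts the per-step greedy control into a quantitative statement usable for every $\lambda$.
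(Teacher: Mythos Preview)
Your argument is correct and follows the paper's overall scaffold (reduce to pointwise decay via Lemma~\ref{weakconv_sufficient}, then use Lemma~\ref{totalbounded_convergence} on the totally bounded superset), but the mechanics inside differ in an interesting way. The paper invokes the Newton-basis telescoping identity only for $\beta=\infty$; for finite $\beta$ it instead bounds the selected value by the power-function estimate at $\lambda_{n+1}$, namely $\vert\lambda_{n+1}(r_n)\vert^\beta P_{\Lambda_n}(\lambda_{n+1})^{1-\beta}\le \Vert f\Vert_K^\beta\, P_{\Lambda_n}(\lambda_{n+1})\to 0$, and for $\beta\in(0,1)$ it inserts a subcase analysis on whether $C:=\lim_n P_{\Lambda_n}(\lambda)$ vanishes. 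You lift the ratio decay $\vert\lambda_{n+1}(r_n)\vert/P_{\Lambda_n}(\lambda_{n+1})\to 0$ to a preliminary fact valid for \emph{any} nested selection and then factor the selected value as $(\text{ratio})^\beta\cdot P_{\Lambda_n}(\lambda_{n+1})$, which gives a single clean argument for all $\beta>0$ and avoids the $C=0$ versus $C>0$ split entirely; your extraction of pointwise decay for $\beta\in(0,1)$ via $P_{\Lambda_n}(\lambda)\ge\vert\lambda(r_n)\vert/\Vert f\Vert_K$ is likewise more direct than the paper's limit argument. The trade-off is that the paper's finite-$\beta$ cases are marginally more elementary (no orthogonal-increment computation needed), whereas your version is more uniform and makes explicit that the ratio decay is a structural feature of nested kernel interpolation rather than a peculiarity of the $f/P$-greedy rule.
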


\begin{proof}
  According to Lemma \ref{weakconv_sufficient}, it is sufficient to show pointwise convergence. Therefore, let $\lambda \in \Gamma$. If $\lambda \in \bigcup_{n \in \mathbb{N}} \text{span} (\Lambda_n)$ holds, there is $n_0 \in \mathbb{N}$ such that $P_{\Lambda_n}(\lambda) = 0$ for $n \geq n_0$. The standard power function estimate then gives
  \begin{align*}
    \vert \lambda (f) - \lambda(I_{K,\Lambda_n} (f)) \vert \leq P_{\Lambda_n} (\lambda) \cdot \Vert f \Vert_K = 0 \qquad \text{for } n \geq n_0.
  \end{align*}
  
  Hence, let us assume that $\lambda \notin \bigcup_{n \in \mathbb{N}} \text{span} (\Lambda_n)$. We distinguish between several cases regarding the parameter $\beta$:

  \begin{itemize}
    \item \underline{$\beta = 0$:} In this case, we have
    \begin{align*}
      P_{\Lambda_n}(\lambda) \leq P_{\Lambda_n}(\lambda_{n+1}) \xrightarrow{n \to \infty} 0
    \end{align*}
    due to the selection rule and Lemma \ref{totalbounded_convergence}. Again, the standard power function estimate gives
    \begin{align*}
      \vert \lambda (f) - \lambda(I_{K,\Lambda_n} (f)) \vert \leq P_{\Lambda_n} (\lambda) \cdot \Vert f \Vert_K \xrightarrow{n \to \infty} 0. 
    \end{align*}

    \item \underline{$\beta \in \left( 0,1 \right)$:} Consider the monotonic and bounded sequence 
    \begin{align*}
      \left( P_{\Lambda_n}(\lambda) \right)_{n \in \mathbb{N}} \qquad \text{and its limit} \qquad C := \lim\limits_{n \to \infty} P_{\Lambda_n} (\lambda).
    \end{align*}
    For $C = 0$ we have pointwise convergence. Otherwise, we have $C > 0$ and
    \begin{eqnarray*}
    \lefteqn{
      \vert \lambda(f) - \lambda (I_{K,\Lambda_n} (f)) \vert^\beta
     } \\ 
     &&       
       = \vert \lambda(f) - \lambda (I_{K,\Lambda_n} (f)) \vert^\beta \cdot P_{\Lambda_n} (\lambda)^{1-\beta} \cdot P_{\Lambda_n}(\lambda)^{\beta - 1} 
      \\
      &&
      \leq \vert \lambda_{n+1} (f) - \lambda_{n+1} (I_{K,\Lambda_n} (f)) \vert^\beta \cdot P_{\Lambda_n} (\lambda_{n+1} )^{1-\beta} \cdot P_{\Lambda_n}(\lambda)^{\beta - 1} 
      \\
      && 
      \leq C^{\beta - 1} \cdot \Vert f \Vert_K^\beta \cdot P_{\Lambda_n}(\lambda_{n+1}) \xrightarrow{n \to \infty} 0.
    \end{eqnarray*}
    This also yields
    \begin{align*}
      \vert \lambda(f) - \lambda (I_{K,\Lambda_n} (f)) \vert \xrightarrow{n \to \infty} 0.
    \end{align*}
    
    \item \underline{$\beta \in \left[ 1, \infty \right)$:} Similarly, we can estimate
    \begin{eqnarray*}
    \lefteqn{
      \vert \lambda(f) - \lambda(I_{K,\Lambda_n}(f)) \vert^\beta 
      } \\
      &&
      \leq \vert \lambda_{n+1} (f) - \lambda_{n+1} (I_{K,\Lambda_n}(f)) \vert^\beta \cdot P_{\Lambda_n}(\lambda_{n+1} )^{1-\beta} \cdot P_{\Lambda_n}(\lambda)^{\beta - 1}
      \\
      && 
      \leq
      \Vert \lambda \Vert_K^{\beta - 1} \cdot \Vert f \Vert_K^\beta \cdot P_{\Lambda_n}(\lambda_{n+1}) \xrightarrow{n \to \infty} 0,
    \end{eqnarray*}
    where we used the bound $P_{\Lambda_n}(\lambda) \leq \| \lambda \|_K$ in the second inequality.

    \item \underline{$\beta = \infty$:} It can be shown that
    $$
      \lim\limits_{n \to \infty} \Vert I_{K,\Lambda_n} \Vert_{K}^2 
      = 
      \Vert I_{K,\Lambda_1}(f) \Vert_{K}^2 + \sum\limits_{i=1}^\infty \frac{\vert \lambda_{n+1}(f) - \lambda_{n+1}(I_{K,\Lambda_n}(f)) \vert^2}{P_{\Lambda_n}(\lambda_{n+1})^2} 
      \leq 
      \Vert f \Vert_{K}^2,
    $$
    see also \cite{wenzel2022adaptive}. This immediately implies the convergence
    \begin{align*}
      \frac{\vert \lambda_{n+1}(f) - \lambda_{n+1}(I_{K,\Lambda_{n}}(f)) \vert}{P_{\Lambda_{n}}(\lambda_{n+1})} \xrightarrow{n \to \infty} 0
      \qquad \mbox{ for } n \to \infty.
    \end{align*}
    As in the case before, we estimate
    \begin{align*}
      \vert \lambda(f) - \lambda(I_{K,\Lambda_n}(f)) \vert \leq \frac{\vert \lambda_{n+1} (f) - \lambda_{n+1} (I_{K,\Lambda_n}(f)) \vert}{P_{\Lambda_n}(\lambda_{n+1})} \cdot \Vert \lambda \Vert_K \xrightarrow{n \to \infty} 0.
    \end{align*}
  \end{itemize}
  Altogether, this proves the stated pointwise convergence for $\beta \in \left[ 0, \infty \right]$.
\end{proof}

\subsection{Geometric greedy algorithm}
The geometric greedy ($h$-greedy) algorithm (cf.~\cite{DeMarchi2005}) aims to generate an optimal sepa\-ration between the data points. It selects the functional that has the largest distance to the current data set, i.e.,
\begin{align} \label{hgreedy_select}
  \underset{\mu \in \Lambda_n}{\min} \; \Vert \lambda_{n+1} - \mu \Vert_{K} = 
  \underset{\lambda \in \Gamma}{\sup} \; \underset{\mu \in \Lambda_n}{\min} \; \Vert \lambda - \mu \Vert_{K} = h_{\Lambda_n, \Gamma}.
\end{align}

The convergence of this greedy algorithm for totally bounded supersets is a direct consequence of Theorem \ref{filldistance_convergence} and Lemma \ref{totalbounded_convergence}. 

\begin{Theorem} \label{hgreedy_convergence}
  Let $\Gamma \subset \mathcal{H}_K^*$ be totally bounded.
  Moreover, let $\left( \Lambda_n \right)_{n \in \mathbb{N}}$ be chosen via the geometric greedy algorithm (\ref{hgreedy_select}). 
  Then, we have $h_{\Lambda_n,\Gamma} \xrightarrow{n \to \infty} 0$ and
  \begin{align*}
    \Vert f - I_{K,\Lambda_n}(f) \Vert_{K} \xrightarrow{n \to \infty} 0 \qquad \text{for all } f \in \mathcal{H}_{K,\Gamma}.
  \end{align*}
\end{Theorem}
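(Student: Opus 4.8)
The plan is to combine the $h$-greedy selection rule (\ref{hgreedy_select}) with Lemma~\ref{totalbounded_convergence} in order to establish $h_{\Lambda_n,\Gamma} \searrow 0$, after which the asserted norm convergence follows immediately from Theorem~\ref{filldistance_convergence}. The \emph{crucial} observation is that the selection rule is tailored so that the distance quantity appearing in Lemma~\ref{totalbounded_convergence} is exactly the fill distance; once this identification is made, the whole statement reduces to bookkeeping.

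First I would record the identification explicitly. By the definition of the selection rule (\ref{hgreedy_select}), the newly chosen functional $\lambda_{n+1}$ satisfies
\begin{align*}
  \textnormal{dist}(\lambda_{n+1}, \Lambda_n) = \underset{\mu \in \Lambda_n}{\min} \; \Vert \lambda_{n+1} - \mu \Vert_{\mathcal{H}_K^*} = h_{\Lambda_n, \Gamma}.
\end{align*}
Applying the first statement of Lemma~\ref{totalbounded_convergence} to the totally bounded set $Z = \Gamma$ and the greedily generated sequence $(\lambda_i)_{i \in \mathbb{N}}$, I then obtain $\textnormal{dist}(\lambda_{n+1}, \Lambda_n) \to 0$, and hence $h_{\Lambda_n, \Gamma} \to 0$.

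Next I would upgrade this plain convergence to the monotone decay $h_{\Lambda_n,\Gamma} \searrow 0$ demanded by Theorem~\ref{filldistance_convergence}. Since the greedy construction produces a nested sequence $\Lambda_n \subset \Lambda_{n+1}$, enlarging the data set can only shrink the inner minimum, so $h_{\Lambda_{n+1},\Gamma} \leq h_{\Lambda_n,\Gamma}$ for every $n$; combined with the limit just established, this yields $h_{\Lambda_n,\Gamma} \searrow 0$. Moreover, because $\Gamma$ is assumed linearly independent, every finite subset $\Lambda_n$ is linearly independent as well, so all hypotheses of Theorem~\ref{filldistance_convergence} are satisfied and the stated convergence holds for all $f \in \mathcal{H}_{K,\Gamma}$.

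I do not anticipate any serious obstacle; the essential content is the identity linking the selection rule to Lemma~\ref{totalbounded_convergence}. The only point deserving comment is the well-definedness of the algorithm, since the supremum in (\ref{hgreedy_select}) need not be attained on a merely totally bounded (hence possibly non-compact) set $\Gamma$. As explained in the Remark preceding Lemma~\ref{totalbounded_convergence}, this is remedied by passing to the weak selection rule $\textnormal{dist}(\lambda_{n+1},\Lambda_n) \geq \gamma \cdot h_{\Lambda_n,\Gamma}$ for a fixed $\gamma \in (0,1)$; the squeeze $\gamma \cdot h_{\Lambda_n,\Gamma} \leq \textnormal{dist}(\lambda_{n+1},\Lambda_n) \to 0$ still forces $h_{\Lambda_n,\Gamma} \to 0$, leaving the argument unaffected.
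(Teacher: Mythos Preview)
Your proposal is correct and follows precisely the approach indicated in the paper, which merely states that the result is a direct consequence of Theorem~\ref{filldistance_convergence} and Lemma~\ref{totalbounded_convergence}; you have simply filled in the bookkeeping (the identification $\textnormal{dist}(\lambda_{n+1},\Lambda_n)=h_{\Lambda_n,\Gamma}$, the monotonicity of the fill distance, and the inherited linear independence of each $\Lambda_n$) that the paper leaves implicit.
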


If $\Gamma$ is parameterized by another metric space $\Omega$, then the geometric greedy algorithm can also be performed in the parameter space. The convergence of this approach follows from Theorem \ref{parameter_convergence} and Lemma \ref{totalbounded_convergence}.

\begin{Remark}
  In practical cases, we will mostly deal with (large) finite supersets $\Gamma \subset \mathcal{H}_K^*$. It should be mentioned here that the discussed greedy methods are well-suited to thin out large data sets to reduce numerical redundancies (cf.~\cite{Dyn2002}). This can also be observed in the numerical tests from the next section.
\end{Remark}

\section{Image reconstruction from Radon data}
\label{sec:numerics}

In~\cite{DeMarchi2018}, a novel concept for kernel-based algebraic reconstruction from scattered Radon data was proposed. The corresponding approach uses a discretized version of the \textit{Radon transform} (see \cite[Chapter 9]{Iske2018Approx}) to reduce the initial operator equation to a generalized interpolation problem. For given radius $r \in \mathbb{R}$ and angle $\theta \in \left[ 0, \pi \right)$, the respective functionals were given by the line integral operators in~\eqref{radon_functional}, i.e.,
$$
    \mathcal{R}_{r,\theta} (f) = \int\limits_{\mathbb{R}} f( r \cdot \cos(\theta) - s \cdot \sin(\theta) , r \cdot \sin(\theta) + s \cdot \cos(\theta) ) \; ds.
$$
However, as shown in~\cite{DeMarchi2018}, this approach fails to work for the standard radially symmetric kernels, since the Radon functionals do not belong to their native dual space. A possible solution to this problem is by using weighted positive definite kernel functions of the form
\begin{align*}
  K_w(x,y) := w(x) \cdot K(x,y) \cdot w(y) \qquad \text{for } x,y \in \mathbb{R}^d, 
\end{align*}
where $w:\mathbb{R}^d \longrightarrow \mathbb{R}$ is an integrable weight function and $K$ is a standard kernel.

In this section, we discuss only a few supporting numerical experiments concerning kernel-based interpolation from scattered Radon data. Our purpose is to provide a proof of concept. To this end, we illustrate selected numerical effects, where our focus is on greedy data selection.
Adapting to the setup in~\cite{DeMarchi2018}, we consider working with the weighted kernel function
\begin{align*}
  K_w(x,y) = e^{- \beta \Vert x \Vert_2^2} \cdot e^{- \alpha \Vert x - y \Vert_2^2} \cdot e^{- \beta \Vert y \Vert_2^2} \qquad \text{for } x,y \in \mathbb{R}^d,
\end{align*}
for shape parameters $\alpha, \beta > 0$.
Note that the chosen kernel $K_w$ is {\em smooth}, and therefore does not comply with our initial idea to work with {\em rough} kernels. On the other hand, the choice of $K_w$ allows us to provide numerical results concerning the convergence of greedy methods, as discussed in Section~\ref{sec:greedy}.

For our numerical tests, we considered using the {\em Shepp-Logan phantom} (cf.~\cite{SheppLogan1974}) 
and the {\em smooth phantom} from~\cite[Section 6]{Rieder2003}, with smoothness parameter $\nu = 3$.
The two targets are visualised in the top left images of Figure \ref{fig:reconstructions_shepp_logan} and \ref{fig:reconstructions_smooth_phantom}, respectively. 
To create a set $\{ \lambda_i(f) \}_{\lambda_i \in \Lambda_N}$ of scattered Radon samples, we generated $N = 10,000$ pairs $(r_i,\theta_i)$ of random radii $r_i$ and angle parameters $\theta_i$ from the domain $\left[ - 1, 1 \right] \times \left[ 0, \pi \right)$, for $i=1,\ldots, N$, each corresponding to one Radon functional $\lambda_i$ of the form (\ref{radon_functional}). The formula for the Riesz representers and the inner products in the native dual space can be found in \cite[Section 4]{DeMarchi2018}. For the approximation quality, we recorded the {\em root mean squared error} (RMSE) on a $256 \times 256$ pixel grid
\begin{align*}
  \left( \frac{1}{256^2} \sum\limits_{1 \leq i,j \leq 256} (f(x_i,y_j) - I_{K_w,\Lambda_N}\left[f\right] (x_i,y_j))^2 \right)^{1/2}.
\end{align*}
We applied the algorithms $P$-greedy, geometric greedy, $f$-greedy, $f/P$-greedy and the psr-greedy for $M=2, 500$ iterations in order to reconstruct each phantom.

The visual results concerning the Shepp-Logan phantom are shown in Figure~\ref{fig:reconstructions_shepp_logan},
where we used the shape parameters $\alpha_{SL} = 2000$ and $\beta_{SL} = 2$.
While both the psr-greedy and the $f$-greedy reconstruction achieved competitive reconstructions, 
the reconstructions from the algorithms $P$-greedy, $f/P$-greedy and $h$-greedy are inferior. 

\begin{figure}[htb]
  \centering
  \includegraphics[scale = 0.28]{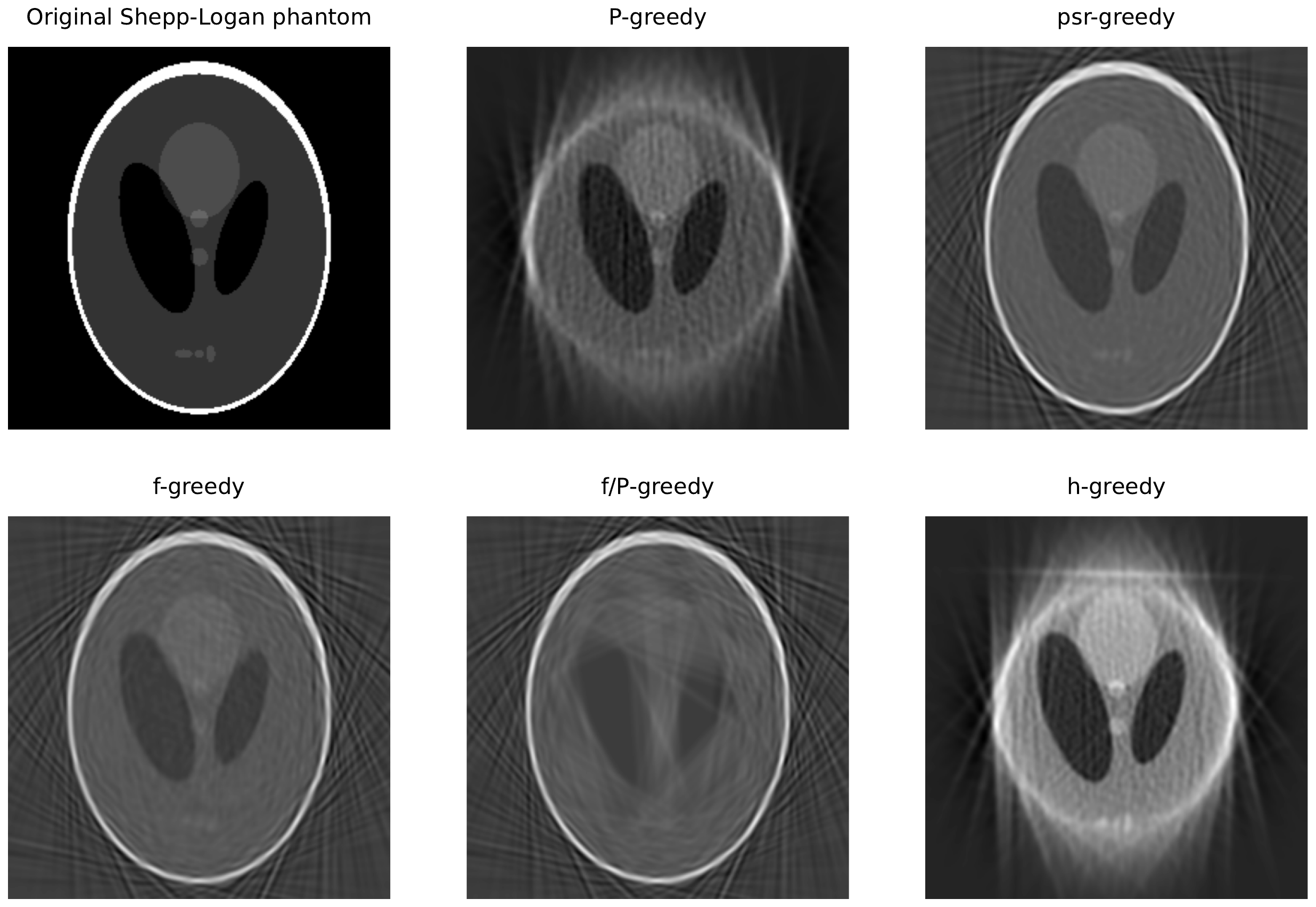}
  \caption{Reconstruction of Shepp-Logan phantom for different greedy methods after $M = 2,500$ iterations.}
  \label{fig:reconstructions_shepp_logan}
\end{figure}

Figure~\ref{fig:measurements_shepp_logan} shows the decay of the RMSE for the different greedy methods (left),
and the growth of the spectral condition number for their corresponding interpolation matrices (right).
Note that the target-dependent algorithms ($f$-greedy, $f/P$-greedy and the psr-greedy) achieved the smallest RMSE values, 
whereas the target-independent algorithms ($P$-greedy, geometric greedy) achieved the smallest condition numbers. 
We can conclude that the psr-greedy algorithm clearly achieved the best tradeoff between approximation quality and numerical stability.

\begin{figure}[htb]
  \centering
  \includegraphics[scale = 0.39]{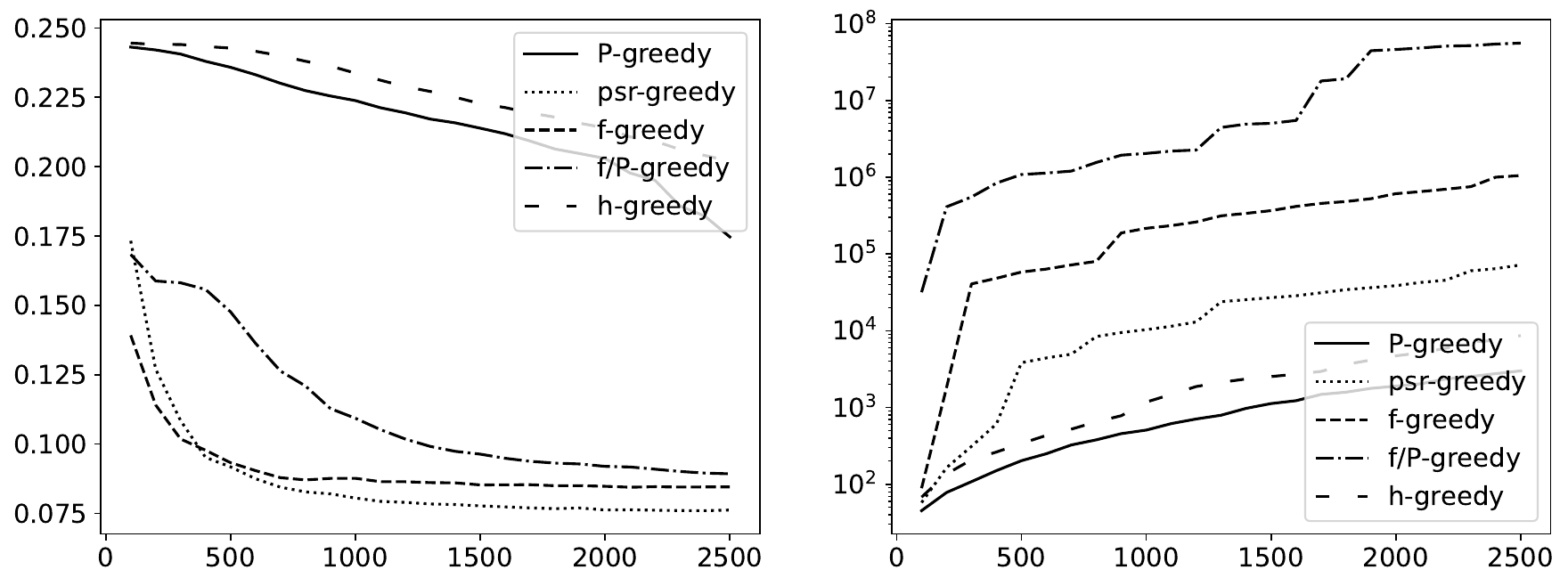}
  \caption{{\bf Reconstruction of the Shepp-Logan phantom.} 
  Decay of RMSE (left) and growth of spectral condition number $\kappa_2(A_{K,\Lambda_n})$ (right) 
  as a function of $n$, i.e., the number of the selected Radon functionals.}
  \label{fig:measurements_shepp_logan}
\end{figure}

Now let us discuss our numerical results concerning the smooth phantom in Figure~\ref{fig:measurements_smooth_phantom} (top left),
where we used the shape parameters $\alpha_{SP} = 700$ and $\beta_{SP} = 3$.
In this test case, we see that the target-dependent algorithms provide better reconstructions but worser condition numbers, 
in comparison to the target-independent algorithms. 

A comparison between the two test cases, Shepp-Logan and smooth phantom, shows that the reconstruction quality for the smooth phantom is clearly superior.
In fact, for the smooth phantom, all five greedy algorithms provided highly competitive reconstructions, see Figure~\ref{fig:reconstructions_smooth_phantom}. 

\begin{figure}[h!]
  \centering
  \includegraphics[scale = 0.28]{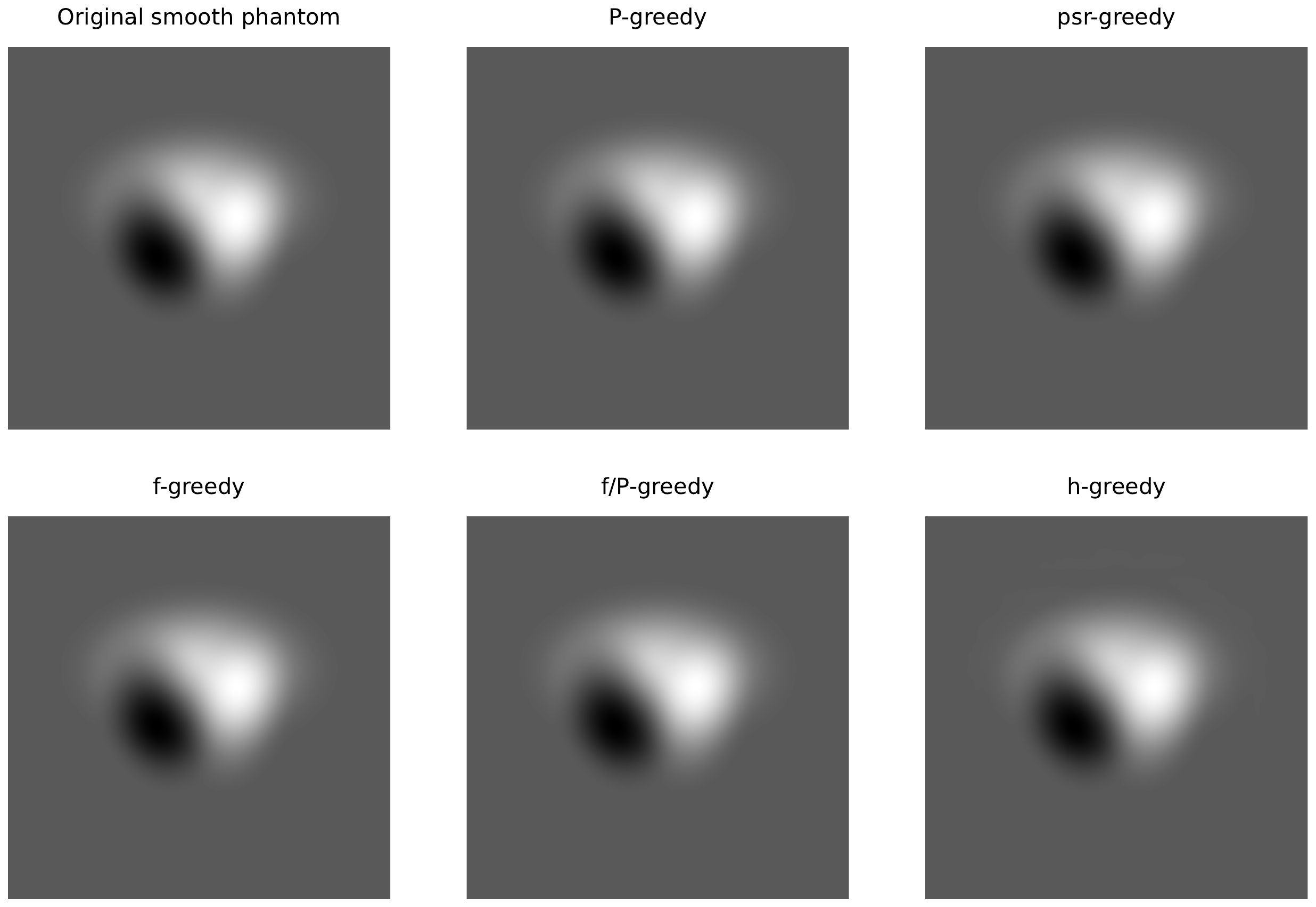}
  \caption{Reconstruction of smooth phantom with smothness parameter $\nu = 3$ for five different greedy methods after $M = 2,500$ iterations.}
  \label{fig:reconstructions_smooth_phantom}
\end{figure}

\begin{figure}[h!]
  \centering
  \includegraphics[scale = 0.39]{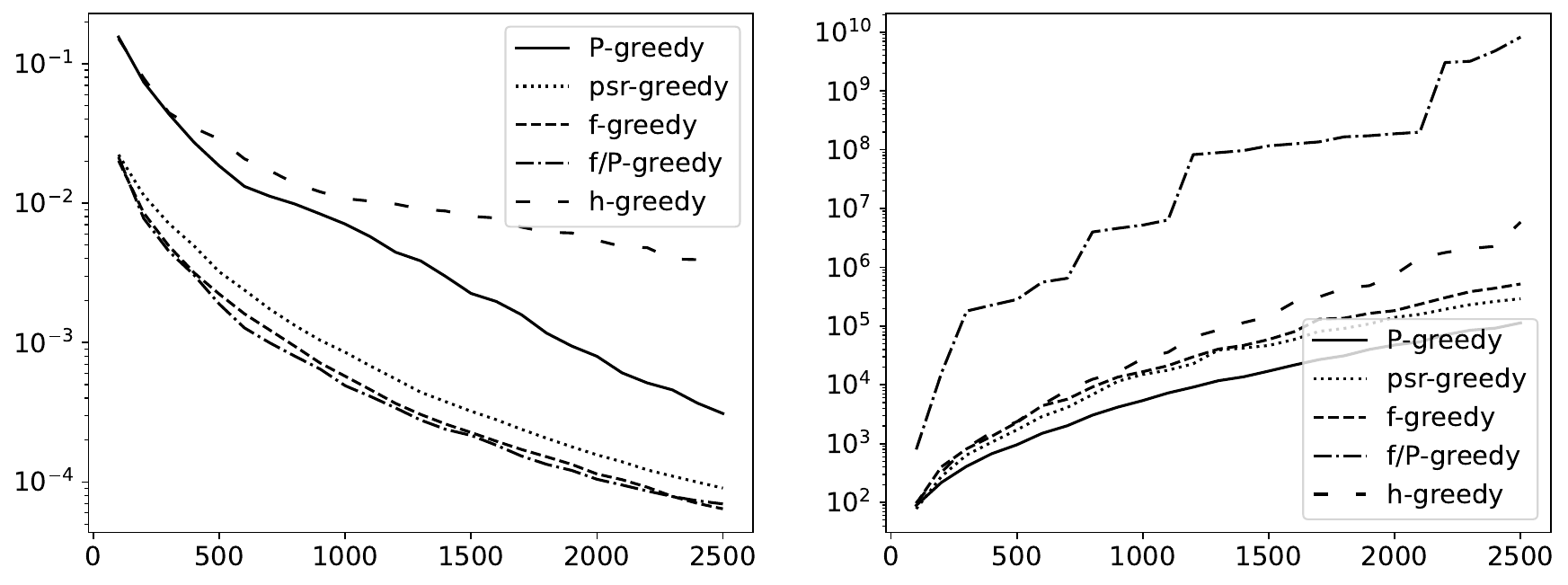}
  \caption{{\bf Reconstruction of the smooth phantom.} 
  Decay of RMSE (left) and growth of spectral condition number $\kappa_2(A_{K,\Lambda_n})$ (right) 
  as a function of $n$, i.e., the number of the selected Radon functionals.}
  \label{fig:measurements_smooth_phantom}
\end{figure}

\begin{table}[htb]
  \begin{minipage}{0.49\textwidth}
    \centering
    \begin{tabular}{lcc}
      \hline
       \textbf{Method}      &   RMSE &   $\kappa_2(A_{K,\Lambda_M})$ \\
      \hline
        no greedy &  8.30e-02 & 1.08e+09 \\
        $P$-greedy & 1.75e-01 & 3.00e+03 \\
        psr-greedy & 7.62e-02 & 7.20e+04 \\
        $f$-greedy & 8.45e-02 & 1.05e+06 \\
        $f/P$-greedy & 8.93e-02 & 5.58e+07 \\
        $h$-greedy & 2.02e-01 & 8.64e+03 \\
      \hline
      \end{tabular}
  \end{minipage}
  \begin{minipage}{0.48\textwidth}
    \centering
    \begin{tabular}{lcc}
      \hline
       \textbf{Method}      &   RMSE &   $\kappa_2(A_{K,\Lambda_M})$ \\
      \hline
        no greedy & 1.52e-05 & 3.32e+13 \\
        $P$-greedy & 3.10e-04 & 1.13e+05 \\
        psr-greedy & 9.05e-05 & 2.92e+05 \\
        $f$-greedy & 6.43e-05 & 5.20e+05 \\
        $f/P$-greedy & 6.97e-05 & 8.26e+09 \\
        $h$-greedy & 3.62e-03 & 5.87e+06 \\
      \hline
      \end{tabular}
  \end{minipage}
  \caption{RSMEs and condition numbers for Shepp-Logan (left) and smooth phantom (right).}
  \label{tbl:final_iteration}
\end{table}

Table~\ref{tbl:final_iteration} shows the RMSE values and condition numbers after the final iteration for each greedy algorithm and each phantom. 
Moreover, we recorded the RMSEs and condition numbers for the reconstruction from the set of all $N = 10,000$ Radon values. 
Note that some of the greedy algorithms achieved comparable RMSEs, while improving the condition number quite significantly, 
in comparison to the case, where all $N = 10,000$ Radon values were used (i.e., no greedy).


\section{Acknowledgment}
We acknowledge the support by the Deutsche Forschungsgemeinschaft (DFG) 
within the Research Training Group GRK 2583 ``Modeling, Simulation and Optimization of Fluid Dynamic Applications''. 
Moreover, we wish to thank Tizian Wenzel for several fruitful discussions.
Finally, we thank the anonymous reviewers for their useful comments and suggestions.



\begin{thebibliography}{10}

\providecommand{\url}[1]{\texttt{#1}}
\expandafter\ifx\csname urlstyle\endcsname\relax
  \providecommand{\doi}[1]{doi: #1}\else
  \providecommand{\doi}{doi: \begingroup \urlstyle{rm}\Url}\fi

\bibitem{Aboiyar2010}
T.~Aboiyar, E.H.~Georgoulis, and A.~Iske:
Adaptive {ADER} methods using kernel-based polyharmonic spline {WENO} reconstruction.
{\sl SIAM Journal on Scientific Computing}~{\bf 32} (6), 2010, 3251--3277.

\bibitem{Buhmann2003}
M.D.~Buhmann:
{\sl Radial Basis Functions}.\\
Cambridge University Press, Cambridge, UK, 2003.

\bibitem{DeMarchi2018}
S.~De Marchi, A.~Iske, and G.~Santin:
Image reconstruction from scattered Radon data by weighted positive definite kernel functions.
{\sl Calcolo}~{\bf 55}, 2018, 1--24.

\bibitem{DeMarchi2005}
S.~De Marchi, R.~Schaback, and H.~Wendland:
Near-optimal data-independent point locations for radial basis function interpolation.
{\sl Advances in Computational Mathematics}~{\bf 23}, 2005, 317--330.

\bibitem{DeVore2013}
R.~DeVore, G.~Petrova, and P.~Wojtaszczyk:
Greedy algorithms for reduced bases in {B}anach spaces.
{\sl Constructive Approximation}~{\bf 37} (3), 2013, 455--466.

\bibitem{Dutta2021}
S.~Dutta, M.W.~Farthing, E.~Perracchione, G.~Savant, and M.~Putti:
A greedy non-intrusive reduced order model for shallow water equations.
{\sl Journal of Computational Physics}~{\bf 439}, 2021, 110378.

\bibitem{Dyn2002}
N.~Dyn, M.S.~Floater, and A.~Iske:
Adaptive thinning for bivariate scattered data.
{\em Journal of Computational and Applied Mathematics}~{\bf 145} (2), 505--517.

\bibitem{Iske1995Hermite}
A.~Iske:
Reconstruction of functions from generalized {H}ermite-{B}irkhoff data.
{\sl Series in Approximations and Decompositions}~{\bf 6}, 1995, 257--264.

\bibitem{Iske2018Approx}
A.~Iske:
{\sl Approximation Theory and Algorithms for Data Analysis}.
Springer International Publishing, 2018.

\bibitem{Iske1996}
A.~Iske and T.~Sonar:
On the structure of function spaces in optimal recovery of point functionals for {ENO}-schemes by radial basis functions.
{\sl Numerische Mathematik}~{\bf 74} (2), 1996, 177--201.

\bibitem{Karvonen2022}
T.~Karvonen. 
Error bounds and the asymptotic setting in kernel-based approximation. 
{\sl Dolomites Res.\ Notes Approx.}~{\bf 15}, 65--77, 2022.

\bibitem{LeVeque2002}
R.L.~LeVeque.
{\sl Finite Volume Methods for Hyperbolic Problems}.
Cambridge University Press, Cambridge, UK, 2002.

\bibitem{Mueller2009Thesis}
S.~M\"uller:
{\sl Komplexit\"at und {S}tabilit\"at von kernbasierten {R}ekonstruktionsmethoden}.
Dissertation, Georg-August-Universit\"at G\"ottingen, 2009.

\bibitem{Mueller2009}
S.~M\"uller and R.~Schaback:
A {N}ewton basis for kernel spaces.
{\sl Journal of Approximation Theory}~{\bf 161} (2), 2009, 645--655.

\bibitem{Narcowich2006}
F.J.~Narcowich, J.D.~Ward, and H.~Wendland: 
Sobolev error estimates and a Bernstein inequality for scattered data interpolation via radial basis functions. 
{\sl Constr.\ Approx.}~{\bf 24}(2), 175--186, 2006.

\bibitem{Natterer2001} 
F.~Natterer: 
{\sl The Mathematics of Computerized Tomography}. 
Classics in Applied Mathematics, vol.~32. SIAM, Philadelphia, 2001.

\bibitem{Pazouki2011}
M.~Pazouki and R.~Schaback:
Bases for kernel-based spaces.
{\sl Journal of Computational and Applied Mathematics}~{\bf 236} (4), 2011, 575--588.

\bibitem{Rieder2003}
A.~Rieder and A.~Faridani:
The {S}emidiscrete {F}iltered {B}ackprojection {A}lgorithm {I}s {O}ptimal for {T}omographic {I}nversion.
{\sl SIAM Journal on Numerical Analysis}~{\bf 41} (3), 2003, 869--892.

\bibitem{Schaback_greedy2019}
R.~Schaback:
A greedy method for solving classes of PDE problems.
arXiv Preprint 1903.11536, {\tt https://doi.org/10.48550/arXiv.1903.11536}, 2019.

\bibitem{Schaback2000}
R.~Schaback and H.~Wendland:
Adaptive greedy techniques for approximate solution of large RBF systems.
{\sl Numerical Algorithms}~{\bf 24} (3), 2000, 239--254.

\bibitem{Schaback2006}
R.~Schaback and J.~Werner:
Linearly constrained reconstruction of functions by kernels with applications to machine learning.
{\sl Advances in Computational Mathematics}~{\bf 25}, 2006, 237--258.

\bibitem{SheppLogan1974}
L.A.~Shepp and B.F.~Logan:
The {F}ourier reconstruction of a head section.
{\sl IEEE Transactions on Nuclear Science}~{\bf 21} (3), 1974, 21--43.

\bibitem{Wendland2005}
H.~Wendland:
{\sl Scattered Data Approximation}.
Cambridge University Press, Cambridge, UK, 2005.

\bibitem{Wenzel2023standard}
T.~Wenzel, G.~Santin, and B.~Haasdonk:
Analysis of target data-dependent greedy kernel algorithms: convergence rates for $f$-, $f \cdot P$- and $f/P$-greedy.
{\sl Constructive Approximation}~{\bf 57} (1), 2023, 45--74.

\bibitem{wenzel2022adaptive}
T.~Wenzel, D.~Winkle, G.~Santin, and B.~Haasdonk:
Adaptive meshfree solution of linear partial differential equations with {PDE}-greedy kernel me\-thods.
arXiv preprint, arXiv:2207.13971, 2022.

\bibitem{Wu1992Hermite}
Z.~Wu:
Hermite-Birkhoff interpolation of scattered data by radial basis functions.
{\sl Approximation Theory Appl.}~{\bf 8}, 1992, 1--10.

\end{thebibliography}
\end{document}